\definecolor{grau}{rgb}{0.5,0.5,0.5}
\newtheorem{theorem}{Theorem}[section]
\newtheorem{lemma}[theorem]{Lemma}
\newtheorem{proposition}[theorem]{Proposition}
\numberwithin{equation}{section}
\def\sp{\textnormal{Spec}}
\def\P1k{\mathbb P_{k}^{1}}
\def\deg{\textnormal{deg}}
\def\sp{\textnormal{Spec}}
\newcommand {\Q} {{\mathbb Q}}
\newcommand {\QQ}  {{\mathbb Q}}
\newcommand {\ZZ}  {{\mathbb Z}}
\newcommand {\CC}  {{\mathbb C}}
\newcommand {\cc} {sp} 
\newcommand {\bb} {(sp)^*}
\newcommand {\p} {x}
\newcommand {\br} {\lambda}
\newcommand {\Br} {\Lambda}
\newcommand {\bk} {\bar{K}}
\newcommand {\uu} {m}
\date{}
\begin{document}

\author{Ariyan Javanpeykar and Rafael von K\"anel}
\title{Szpiro's small points conjecture for cyclic covers}
\maketitle

\begin{abstract}
\small Let $X$ be a smooth, projective and geometrically connected curve of genus at least two, defined over a number field. In 1984, Szpiro conjectured that $X$  has a  ``small point''.  
In this paper we prove that if $X$ is a cyclic cover of prime degree of the projective line, then $X$ has infinitely many ``small points''. In particular, we establish the first cases of Szpiro's small points conjecture, including the genus two case and the hyperelliptic case. The proofs use Arakelov theory for arithmetic surfaces and the theory of logarithmic forms.
\end{abstract}

\section{Introduction}\label{sec:intro}

Let $X$ be a smooth, projective and geometrically connected curve of genus at least two, defined over a number field. In 1984, Szpiro \cite{szpiro:faltings} conjectured that  $X$ has a ``small point'', where a ``small point'' is an algebraic point of the curve $X$ with ``height'' bounded from above in a certain way. We refer to Section \ref{sec:conj} for a precise formulation of Szpiro's small points conjecture.
Szpiro  proved that his conjecture implies  an ``effective Mordell conjecture''. 
In other words, Szpiro's remarkable approach shows that to bound the height of all rational points of any curve $X$, it suffices to produce for any curve $X$ at least one ``small point''.
The small points conjecture was studied in Szpiro's influential seminars \cite{szpiro:spa,szpiro:spe}, see also Szpiro's articles \cite{szpiro:lefschetz,szpiro:grothendieck}.

The results of this paper are as follows. Let $\mathcal C$ be the set of curves $X$ as above which are cyclic covers of prime degree of the projective line. 
For example, if $X$ has genus two or is hyperelliptic, then $X\in\mathcal C$.
Our first result (see Theorem \ref{thm:cyclic}) gives that any $X\in\mathcal C$ has infinitely many ``small points''.  
In particular, we establish the first cases of  Szpiro's small points conjecture.
Furthermore, we improve Theorem \ref{thm:cyclic} for hyperelliptic curves (see Theorem \ref{thm:hyper}) and for genus two curves (see Theorem \ref{thm:genus2}) in the sense that we produce ``smaller points'' on such curves. 
To give the reader a more concrete idea of our results we now state a special case of Theorem \ref{thm:genus2}. If the curve $X$ has genus two and is defined over $\QQ$, then $X$ has infinitely many algebraic points $\p$ that satisfy
$$\max\bigl(h_{NT}(\p),h(\p)\bigl)\leq (10\prod p)^{10^6}$$
with the product taken over all bad reduction primes $p$ of $X$. Here $h_{NT}$ is the N\'eron-Tate height and $h$ is the stable Arakelov height, see Section \ref{sec:conj}. 
We also give in Proposition \ref{prop:abc} and Proposition \ref{prop:cyclic} versions of the above theorems with ``exponentially smaller points''. 
These versions are either conditional on the $abc$ conjecture, or they depend on lower bounds for Faltings' delta invariant  which are not known to be effective.

We remark that ``effective Siegel or Mordell'' applications of our completely explicit results require hyperbolic curves which admit Kodaira-Par{\v{s}}in type constructions with all fibers in $\mathcal C$. There exists no hyperbolic curve for which  Kodaira's construction (see for example \cite[p.266]{szpiro:spa})  is of this form and we are not aware of a hyperbolic curve for which Par{\v{s}}in's construction (see Par{\v{s}}in \cite{parshin:construction}, or \cite[p.268]{szpiro:spa}) has all fibers in $\mathcal C$. Therefore our results have at the moment no ``effective Siegel or Mordell'' applications. However, there is the hope that new and more suitable Kodaira-Par{\v{s}}in type constructions will be discovered. For example, recently Levin \cite{levin:siegelshaf} gave a new Par{\v{s}}in type construction for integral points on hyperelliptic curves with all fibers in $\mathcal  C$. 
 
The main ingredients for our proofs are as follows. Let $X$ be a smooth, projective and geometrically connected curve of genus $g\geq 2$, defined over a number field $K$. On using fundamental results of Arakelov \cite{arakelov:intersectiontheory}, Faltings \cite{faltings:arithmeticsurfaces} and Zhang \cite{zhang:positive}, we show that for any $\epsilon>0$ there exist infinitely many algebraic points $\p$ of $X$ that satisfy 
\begin{equation}\label{eq:sp}
h_{NT}(\p)\leq  2g(g-1)h(\p)\leq  ge(X)+\epsilon.
\end{equation}
Here $e(X)$ is a certain stable Arakelov self-intersection number, defined in (\ref{def:ex}). Thus, to produce ``small points'' of $X$, it suffices to estimate $e(X)$ effectively in terms of $K,S$ and $g$, for $S$ the set of finite places of $K$ where $X$ has bad reduction.

In the proof of Theorem \ref{thm:cyclic} we use properties of the Belyi degree $\deg_B(X)$ of $X$, which is defined in (\ref{def:belyi}). From \cite[Theorem 1.1.1]{javanpeykar:belyi} we obtain
\begin{equation}\label{eq:ex1}
e(X)\leq 10^8\deg_B(X)^5g.
\end{equation}
To control $\deg_B(X)$ we use an effective version  of Belyi's theorem \cite{belyi:theorem}, worked out by Khadjavi in \cite{khadjavi:belyi}. We deduce an explicit upper bound for $\deg_B(X)$ in terms of $K,g$ and the heights $h(\br)$ of the cross-ratios $\br$ of four (geometric) branch points of a finite morphism $X\to\mathbb P^1_K$. 
To estimate $h(\br)$ we assume that $X\in\mathcal C$ and then we combine  \cite[Proposition 2.1]{jore:shafarevich} of de Jong-R\'emond with \cite[Proposition 6.1 (ii)]{rvk:szpiro}. 
The latter result is based on the theory of logarithmic forms and the former result generalizes ideas of Par{\v{s}}in \cite{parshin:shafarevich} and Oort \cite{oort:shafarevich}. 
This leads to an explicit upper bound, in terms of $K,S$ and $g$, for $\deg_B(X)$, then for $e(X)$ by (\ref{eq:ex1}), and then finally for $h_{NT}(x)$ and $h(x)$ by (\ref{eq:sp}). 

To discuss the proof of Theorem \ref{thm:genus2} we assume that $g=2$. From \cite[Proposition 5.1 (v)]{rvk:szpiro} we obtain that Faltings' delta invariant $\delta(X_\CC)$ of a compact connected Riemann surface $X_\CC$ of genus two satisfies $-186\leq \delta(X_\CC).$ Then the Noether formula  for arithmetic surfaces, due to Faltings \cite{faltings:arithmeticsurfaces}  and Moret-Bailly \cite{moba:noether}, leads to 
\begin{equation}\label{eq:ex2}
e(X)\leq 12h_F(J)+201.
\end{equation}
Here $h_F(J)$ is the absolute stable Faltings height of the Jacobian $J=\textnormal{Pic}^0(X)$ of $X$. 
The inequalities in \cite[Proposition 4.1 (i), Proposition 6.1 (ii)]{rvk:szpiro} slightly refine a method developed by Par{\v{s}}in \cite{parshin:shafarevich}, Oort \cite{oort:shafarevich} and de Jong-R\'emond \cite{jore:shafarevich}. On combining these inequalities, we deduce an explicit upper bound, in terms of $K,S$ and $g$, for $h_F(J)$, then for $e(X)$ by (\ref{eq:ex2}), and then finally for $h_{NT}(x)$ and $h(x)$ by (\ref{eq:sp}). 

To prove Theorem \ref{thm:hyper} we replace in the proof of Theorem \ref{thm:genus2} the results from Arakelov theory by a formula of de Jong \cite[Theorem 4.3]{dejong:weierstrasspoints} and the explicit estimate  for the hyperelliptic discriminant modular form which is given in \cite[Lemma 5.4]{rvk:szpiro}. 

The plan of this paper is as follows. In Section \ref{sec:conj} we discuss Szpiro's small points conjecture and its variation which involves the N\'eron-Tate height. 
In Section \ref{sec:results} we state our theorems, and we give Proposition \ref{prop:abc} which is conditional on the $abc$ conjecture. 
Then in Section \ref{sec:curves} we collect results from Arakelov theory for arithmetic surfaces. We also give an upper bound for the Belyi degree of $X$. 
In Section \ref{sec:cyclic} we consider curves $X\in \mathcal C$ and we prove Theorem \ref{thm:cyclic}. We also establish Proposition \ref{prop:cyclic}. It refines our theorems, with the disadvantage inherent that it involves a lower bound for Faltings' delta invariant which is not known to be effective. 
In Section \ref{sec:hyper} we first show  Lemma \ref{lem:tx} and Lemma \ref{lem:wpf}. They give explicit results for certain (Arakelov) invariants of hyperelliptic curves. Then we use these lemmas to prove 
Theorem \ref{thm:hyper}. Finally, in Section \ref{sec:genus2}, we give a proof of Theorem \ref{thm:genus2}.

Throughout this paper we shall use the following notations and conventions. Let $K$ be a number field. We denote by $\bar{K}$ a fixed algebraic closure of $K$. If $L$ is a field extension of $K$, then we write $[L:K]$ for the relative degree of $L$ over $K$. By a curve $X$ over $K$ we mean a smooth, projective and geometrically connected curve $X\to\sp(K)$. 
For any finite place $v$ of $K$, we write $N_v$ for the number of elements in the residue field of $v$ and we let $v(\mathfrak a)$ be the order of $v$ in a fractional ideal $\mathfrak a$ of $K$. We denote by $\lvert S\rvert$ the cardinality of a set $S$. Finally, by $\log$ we mean the principal value of the natural logarithm and we define the product taken over the empty
set as $1$.

\section{Small points conjectures}\label{sec:conj}

In this section we state and discuss Szpiro's small points conjecture, and its variation which involves the N\'eron-Tate height. Let $K$ be a number field and let $g\geq 2$ be an integer. We denote by $X$ a curve over $K$ of genus $g$. 

We take an algebraic point $\p\in X(\bar{K})$. 
The classical result of Deligne-Mumford gives a finite extension $L$ of $K$ such that $X_L$ has semi-stable reduction over $B$ and such that $\p\in X(L)$, where $B$ is the spectrum of the ring of integers of $L$.  
We denote by $\omega_{\mathcal X/B}$ the relative dualizing sheaf of the minimal regular model $\mathcal X$ of $X_L$ over $B$.  Let $\omega=(\omega_{\mathcal X/B}, \lVert \cdot \rVert)$  with $\lVert \cdot \rVert$ the Arakelov metric, let $(\cdot ,\cdot)$ be the intersection product of Arakelov divisors on $\mathcal X$ and identify $\omega$ and $\p$  with the corresponding Arakelov divisors on $\mathcal X$, see \cite[Section 2]{faltings:arithmeticsurfaces} for definitions. 
Then the stable Arakelov height $h(\p)$ of $\p$ is the real number defined by
\begin{equation}\label{def:h}
[L:\Q]h(\p)=(\omega,\p).
\end{equation} 
The factor $[L:\QQ]$ and the semi-stability of $\mathcal X$ assure that the definition of $h(x)$ does not depend on the choice of a field  $L$ with the above properties. 

Let $S$ be a set of finite places of $K$. We say that a constant $c$, depending on the data $(\mathcal D)$, is effective if one can in principle explicitly compute the real number $c$ provided that $(\mathcal D)$ is given. In 1984, Szpiro \cite[p.101]{szpiro:faltings} formulated his small points conjecture in terms of the Arakelov self-intersection $(\p,\p)$ of $x$. However, Arakelov's  adjunction formula  $(\omega,\p)=-(\p,\p)$ in \cite{arakelov:intersectiontheory}
shows that Szpiro's small points conjecture coincides with:

\vspace{0.3cm}
\noindent{\bf Conjecture $(\cc)$.}
\emph{There exists an effective constant $c$, depending only on $K$, $S$ and $g$, with the following property. Suppose $X$ is a curve over $K$ of genus $g$,  with set of bad reduction places $S$. If $X$ has  semi-stable reduction over  the ring of integers of $K$, then there is a point $\p\in X(\bar{K})$ that satisfies $h(\p)\leq c.$
\vspace{0.3cm}}

It is known (see Szpiro \cite[p.101]{szpiro:faltings})  that this conjecture implies an ``effective Mordell conjecture'', and that Szpiro established a rather strong geometric analogue of Conjecture ($\cc$)  in \cite{szpiro:seminaire81}. 
We also mention that if Conjecture ($\cc$) holds, then it holds without the semi-stable assumption.  Indeed, on combining results of Grothendieck-Raynaud \cite[Proposition 4.7]{grra:neronmodels} and Serre-Tate \cite[Theorem 1]{seta:goodreduction} with Dedekind's discriminant theorem, one obtains a finite field extension $M$ of $K$ such that $X_M$ has semi-stable reduction over the ring of integers of $M$ and such that $[M:K]$ and the relative discriminant of $M$ over $K$ are effectively controlled in terms of $K$, $S$ and $g$. 

In the Grothendieck Festschrift, Szpiro \cite{szpiro:grothendieck} formulated   another version of Conjecture $(\cc)$. This formulation involves the N\'eron-Tate height
\begin{equation}\label{def:nt}
h_{NT}(\p)
\end{equation} 
of $\p\in X(\bar{K})$ which is defined as the N\'eron-Tate height of the divisor class $(2g-2)\p-\Omega^1$ in the Jacobian $\textnormal{Pic}^0(X_L)$ of $X_L$ for $L$ as above.
Here $\Omega^1$ denotes the sheaf of differential one-forms of the curve $X_L$ over $L$. We now give a version of Szpiro's ``Conjecture des deux petits points pour N\'eron-Tate'' which is stated in \cite[p.244]{szpiro:grothendieck}. 

\vspace{0.3cm}
\noindent{\bf Conjecture $\bb$.}
\emph{Any curve $X$ over $K$ of genus $g$ has two distinct points $\p_i\in X(\bar{K})$, $i=1,2$, that satisfy $h_{NT}(\p_i)\leq c,$ where $c$ is an effective constant which depends only on $K$, $g$ and the geometry of the bad reduction of $X$.\vspace{0.3cm}}

We point out that the conjecture in \cite[p.244]{szpiro:grothendieck} describes the constant $c$ in Conjecture $\bb$ more precisely. Szpiro conjectures in addition the existence of constants $a(g)$, $b(g)$, depending only on $g$, and constants $A(X)$, $B(X)$, depending only on the geometry of the bad reduction of the curve $X$, such that
\begin{equation*}
c=a(g)A(X)\log D_K+b(g)B(X).
\end{equation*}
Here $D_K$ denotes the absolute value of the discriminant of $K$ over $\QQ$. Par{\v{s}}in \cite{parshin:szpiro} showed that Conjecture $\bb$, with $c$ of the form as displayed above, implies Szpiro's classical discriminant conjecture for elliptic curves. In addition, we mention that Moret-Bailly proposed a  version of the small points conjecture which is equivalent to his adaption \cite[Hypoth\`ese BM]{moret-bailly:effmordell} of Par{\v{s}}in's conjecture in \cite{parshin:szpiro}, see \cite[Corollaire 3.5]{moret-bailly:effmordell}. The conjectures, which are considered in  \cite{parshin:szpiro} and \cite{moret-bailly:effmordell}, are inspired by the classical Bogomolov-Miyaoka-Yau inequality for algebraic surfaces.

We shall see in Lemma \ref{lem:conj} (i) that Conjecture ($\cc$) implies Conjecture $\bb$. An unconditional proof of the converse implication seems to be difficult.
However,  Szpiro's arguments in \cite[p.244]{szpiro:grothendieck} combined with Moret-Bailly's proof of \cite[Th\'eor\`eme 5.1]{moret-bailly:effmordell} show that Conjecture $\bb$ still implies an ``effective Mordell conjecture''.

Next, we discuss a possible dependence of the constants in Conjecture ($\cc$) and Conjecture $\bb$ on the important quantities $D_K$ and $N_S=\prod_{v\in S}N_v$.  We denote by $h_F(A)$  the absolute stable Faltings height of an abelian variety $A$ over $\QQ$ defined in \cite[p.354]{faltings:finiteness} and we denote by $N_A$ the norm of the usual conductor ideal of $A$. Now, we can state a version of Frey's height conjecture \cite[p.39]{frey:linksulm}. This version is inspired by its geometric analogue which is proven, for example, in Deligne \cite[p.14]{deligne:monodromie}.

\vspace{0.3cm}
\noindent{\bf Conjecture $(h)$.}
\emph{For any real $\epsilon>0$ and any integer $n\geq 1$, there is a constant $c$, depending only on $\epsilon,n$, such that if  $A$ is an abelian variety over $\QQ$ of dimension $n$, then $h_F(A)\leq (\frac{n}{2}+\epsilon)\log N_A+c.$\vspace{0.3cm}}

Lemma \ref{lem:conj} (ii) below shows that if Conjecture $(h)$ holds for some fixed $\epsilon>0$ and $n=[K:\QQ]g$, then there exists a constant $c'$, depending only on $[K:\QQ],g$ and the fixed $\epsilon$, with the following property. 
Any curve $X$ over $K$ of genus $g$, with set of bad reduction places $S$, has an algebraic point $x\in X(\bar{K})$ that satisfies $$ h(x)\leq \frac{3g}{g-1}\bigl(g+\frac{2\epsilon}{[K:\QQ]}\bigl)(\log N_S+\log D_K)+c'$$
and has two distinct points $x_i\in X(\bar{K})$, $i=1,2$, which both satisfy
$$ h_{NT}(x_i)\leq 12g^2\bigl(g+\frac{2\epsilon}{[K:\QQ]}\bigl)(\log N_S+\log D_K)+c'.$$
This conditional result suggests a possible dependence  of the constants in Conjecture $(\cc)$ and  $\bb$ on $D_K$ and $N_S$. To conclude our discussion we mention that the rather strong geometric version of Conjecture $(sp)$, established by Szpiro (see \cite[p.101]{szpiro:faltings}), suggests even sharper upper bounds for $h(x)$ and $h_{NT}(x_i)$ in terms of $D_K$ and $N_S$.

\section{Statements of results}\label{sec:results}
In this section we state and discuss  Theorem \ref{thm:cyclic}, Theorem \ref{thm:hyper} and Theorem \ref{thm:genus2}. We also give Proposition \ref{prop:abc} which is conditional on the $abc$ conjecture.

To state our results we need to introduce some notation. Let $g\geq 2$ be an integer, let $K$ be a number field and 
let $S$ be a set of finite places of $K$. We denote by $D_K$  the absolute value of the discriminant of $K$ over $\QQ$ and we denote by $d=[K:\QQ]$  the degree of $K$ over $\QQ$. 
To measure $K$, $S$ and $g$ we use the quantities
\begin{equation}\label{def:para}
D_K, \ d, \ N_S=\prod_{v\in S} N_v, \ g \ \textnormal{ and }  \ \nu=d(5g)^{5}.
\end{equation}
The only purpose of introducing $\nu$ is to simplify the exposition.
Let $h$ be the stable Arakelov height and let $h_{NT}$ be the N\'eron-Tate height. These heights are defined in (\ref{def:h}) and (\ref{def:nt}) respectively. 
Let $\mathcal C=\mathcal C(K)$ be the set of curves $X$ over $K$ of genus $\geq 2$ such that there is a finite morphism $X\to \mathbb P^1_K$ of prime degree which is geometrically a cyclic cover. 
Our first theorem establishes in particular Conjecture $(\cc)$ and  $\bb$ for all $X\in\mathcal C$. 
\begin{theorem}\label{thm:cyclic}
Let $X$ be a curve over $K$ of genus $g$, with set of bad reduction places $S$. If $X\in\mathcal C$, then there exist infinitely many $\p\in X(\bar{K})$ that satisfy
$$\log\max \bigl(h_{NT}(\p),h(\p)\bigl)\leq \nu^{d\nu}(N_SD_K)^\nu.$$
\end{theorem}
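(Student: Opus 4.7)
By (\ref{eq:sp}) we already have, for every $\epsilon>0$, infinitely many $\p\in X(\bar K)$ satisfying
$$h_{NT}(\p)\le 2g(g-1)h(\p)\le ge(X)+\epsilon,$$
so the task reduces to producing an effective upper bound for the stable Arakelov self-intersection $e(X)$ of the shape $\nu^{d\nu}(N_S D_K)^\nu$; fixing $\epsilon=1$ then yields infinitely many $\p$ at which both $h_{NT}(\p)$ and $h(\p)$ are controlled by essentially the same quantity. I would next invoke (\ref{eq:ex1}): since $e(X)\le 10^8 g\,\deg_B(X)^5$, the problem becomes to bound the Belyi degree of $X$ effectively in terms of $D_K,d,N_S$ and $g$.

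To bound $\deg_B(X)$, I would pick a finite morphism $\pi\colon X\to\PP^1_K$ of prime degree which is geometrically a cyclic cover (such a $\pi$ exists by the definition of $\mathcal C$) and apply Khadjavi's explicit form of Belyi's theorem, as recorded in Section \ref{sec:curves}. This produces an upper bound for $\deg_B(X)$ in terms of $d$, $g$ and the stable heights $h(\br)$ of the cross-ratios $\br\in\bar K$ of any four among the geometric branch points of $\pi$. The assumption $X\in\mathcal C$ now enters crucially: \cite[Proposition 2.1]{jore:shafarevich} of de Jong--R\'emond, applied to $\pi$, constrains the branch divisor of $\pi$ so that any such cross-ratio $\br$ lies in an effectively bounded extension of $K$ and is a unit outside an effectively bounded finite set of places. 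Feeding the resulting $S$-unit equation into \cite[Proposition 6.1 (ii)]{rvk:szpiro}, which rests on the theory of logarithmic forms, yields an explicit bound $h(\br)\le G(D_K,d,N_S,g)$.

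Chaining the three estimates and carefully bookkeeping the numerical constants then produces the required bound for $e(X)$, and hence for both heights via (\ref{eq:sp}). The main obstacle, and the step at which one must be most careful, is the propagation of the logarithmic-forms bound through the two subsequent inequalities: Khadjavi's effective Belyi already inflates $h(\br)$ considerably, and the fifth power $\deg_B(X)^5$ in (\ref{eq:ex1}) compounds this, so the intermediate estimates must be arranged in such a way that $N_S$ and $D_K$ enter only polynomially inside the outer exponential factor $\nu^{d\nu}$, where $\nu=d(5g)^5$. Once this is achieved, the ``infinitely many'' clause of the conclusion is automatic from the corresponding clause in (\ref{eq:sp}).
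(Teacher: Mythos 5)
Your plan is correct and follows essentially the same route as the paper: reduce via Zhang's theorem and Lemma \ref{lem:nth} to bounding $e(X)$, then via Lemma \ref{lem:ex} to bounding $\deg_B(X)$, then apply Khadjavi's effective Belyi theorem to the cyclic cover and control the cross-ratios of branch points through the de Jong--R\'emond unit-equation observation combined with the logarithmic-forms bound of \cite[Proposition 6.1 (ii)]{rvk:szpiro}. This is exactly the chain Lemma \ref{lem:ex}, Lemma \ref{lem:zhang}, Lemma \ref{lem:nth}, Lemma \ref{lem:degbux} and Lemma \ref{lem:ux} (i) used in the paper's proof.
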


Next, we  state two results which improve Theorem \ref{thm:cyclic} in certain cases. We say that a curve $X$ over $K$ of genus $g$ is a hyperelliptic curve over $K$ if there is a finite morphism $X\to \mathbb P^1_K$ of degree two. For example, any genus two curve over $K$ is a hyperelliptic curve over $K$. We obtain the following theorem for hyperelliptic curves.

\begin{theorem}\label{thm:hyper}
Let $X$ be a hyperelliptic curve over $K$ of genus $g$, with set of bad reduction places $S$. If $\mathcal W$ denotes the set of Weierstrass points of $X$, then $$\sum_{\p\in \mathcal W}h_{NT}(\p)\leq \nu^{8^gd\nu}(N_SD_K)^\nu.$$
\end{theorem}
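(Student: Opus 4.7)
The plan is to run the same skeleton as for Theorem \ref{thm:genus2}, but with two substitutions dictated by the introductory summary: in place of the Noether formula I will use de Jong's hyperelliptic identity \cite[Theorem 4.3]{dejong:weierstrasspoints}, which directly involves the Weierstrass divisor, and in place of the archimedean Arakelov input used for Theorem \ref{thm:genus2} I will use the explicit estimate of \cite[Lemma 5.4]{rvk:szpiro} for the hyperelliptic discriminant modular form. The result of these substitutions should be a bound with the same qualitative shape as in Theorem \ref{thm:cyclic}, but with an extra $g$-dependence coming only from Lemma 5.4.

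First I would pass to a finite extension $L/K$ over which $X_L$ acquires semistable reduction and all $2g+2$ Weierstrass points (equivalently, the branch points of the hyperelliptic double cover $X_L\to\mathbb P^1_L$) become rational. By the arguments recalled in Section \ref{sec:conj} (Grothendieck-Raynaud, Serre-Tate and Dedekind), the degree $[L:K]$, the discriminant $D_L$ and the norm of the bad reduction set over $L$ are effectively controlled by $(d,D_K,N_S,g)$, so working on the minimal regular model $\mathcal X\to B$ with $B=\sp(\mathcal O_L)$ does not change the shape of the target bound.

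Next I would apply de Jong's formula on $\mathcal X/B$ to obtain, schematically,
\begin{equation*}
\sum_{\p\in\mathcal W} h_{NT}(\p)=\alpha_g\log\|\Delta_g\|_{\mathrm{fin}}+\beta_g\log\|\Delta_g\|_{\mathrm{arch}}+\gamma_g h_F(J)+\delta_g,
\end{equation*}
where $\Delta_g$ is the hyperelliptic discriminant modular form and the coefficients depend only on $g$. The finite contribution is bounded by a $g$-dependent multiple of $\log N_S$, since $\Delta_g$ has support inside the bad reduction locus. The archimedean contribution is estimated via \cite[Lemma 5.4]{rvk:szpiro} in terms of the heights $h(\br)$ of the cross-ratios of four branch points, and it is precisely that lemma that forces the $8^g$-type factor into the exponent. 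The Faltings height $h_F(J)$ of the Jacobian is then bounded by the Parshin-Oort-de Jong-Rémond method of \cite[Proposition 4.1 (i)]{rvk:szpiro}, again in terms of the same cross-ratio heights. The $h(\br)$ themselves are bounded via \cite[Proposition 6.1 (ii)]{rvk:szpiro}: the cross-ratios of the $2g+2$ branch points satisfy an $S$-unit equation in a controlled extension, and the theory of logarithmic forms gives an explicit bound polynomial in $\log N_S$ and $\log D_K$ with parameters depending on $d$ and $g$. Assembling everything and simplifying should produce the announced estimate $\nu^{8^gd\nu}(N_SD_K)^\nu$.

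The main obstacle I anticipate is the archimedean step: controlling $\log\|\Delta_g\|_{\mathrm{arch}}$ pointwise on the moduli space with fully explicit, uniform $g$-dependence is a delicate analytic input, and it is exactly where the doubly exponential factor $8^g$ in the exponent originates. A secondary concern is ensuring that the extension $L/K$ chosen in the first step does not inflate any of the four quantities $(d,D_K,N_S,g)$ beyond what can be absorbed into a single clean expression in $\nu$; this is routine but must be tracked carefully so that de Jong's identity, which is stated over a semistable base, can be applied without polluting the final bound.
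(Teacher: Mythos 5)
Your plan is essentially the paper's proof: pass to a semistable extension where the Weierstrass points are rational, apply de Jong's identity \cite[Theorem 4.3]{dejong:weierstrasspoints}, control the archimedean term via \cite[Lemma 5.4]{rvk:szpiro}, and bound $h_F(J)$ by $u(g)\mu_X$ with $\mu_X$ estimated by the theory of logarithmic forms. Two of your supporting claims, however, point in the wrong direction and would derail the write-up if taken literally.

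First, the non-archimedean contribution. When de Jong's identity is solved for $\sum_{\p\in\mathcal W}h_{NT}(\p)$, the finite terms --- the stable discriminant $\Delta(X)$, the intersection $(E,\omega)$ with the residual divisor, and the terms $-(\Phi_\p,\Phi_\p)$ --- appear with a sign such that one only needs \emph{lower} bounds on them, and these are free: $E$ is vertical and effective and $\mathcal X$ is minimal, so $(E,\omega)\geq 0$; likewise $\Delta(X)\geq 0$ and $-(\Phi_\p,\Phi_\p)\geq 0$. Your proposed \emph{upper} bound of the finite contribution by ``a $g$-dependent multiple of $\log N_S$'' is both unnecessary and unavailable: bounding $\Delta(X)$ from above effectively is essentially as hard as the main theorems of the paper (the multiplicities $\delta_v$ on the minimal regular model are not bounded in terms of $g$ alone), and the best effective bounds are exponential in $N_SD_K$, which would ruin the argument if it actually needed them. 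Second, the provenance of the $8^g$. Lemma 5.4 of \cite{rvk:szpiro} yields only the polynomial estimate $-36g^3\leq\log T(X_\sigma)$, hence an additive $-293g^5$ in the Weierstrass-point inequality; it is not the source of the doubly exponential factor. That factor enters through $u(g)=8^{(11g)^38^g}$ in the comparison $h_F(J)\leq u(g)\mu_X$ from \cite[Proposition 4.1 (i)]{rvk:szpiro}, which traces back to R\'emond's explicit height comparisons, and it is this $u(g)$ that produces the exponent $8^gd\nu$ in the final bound. With these two corrections your outline matches the paper's argument.
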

The N\'eron-Tate height $h_{NT}$ is non-negative, 
and any hyperelliptic curve over $K$ of genus $g$ has exactly $2g+2$ Weierstrass points. Thus Theorem \ref{thm:hyper} gives another proof of Conjecture $\bb$ for all hyperelliptic curves over $K$ of genus $g$. The next result refines  Theorem \ref{thm:cyclic} in the special case of genus two curves.

\begin{theorem}\label{thm:genus2}
Suppose $X$ is a curve over $K$ of genus two, with set of bad reduction places $S$. Then there exist infinitely many $\p\in X(\bar{K})$ that satisfy $$\max\bigl(h_{NT}(\p),h(\p)\bigl)\leq \nu^{2d\nu}(N_SD_K)^{\nu}, \ \ \ \nu=10^5d.$$
\end{theorem}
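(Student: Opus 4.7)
The plan is to reduce the statement to a bound on the stable Arakelov self-intersection $e(X)$, and then to the absolute stable Faltings height of the Jacobian. Fix any $\epsilon>0$. Applying (\ref{eq:sp}) with $g=2$ yields infinitely many $x\in X(\bar{K})$ with $\max(h_{NT}(x),h(x))\leq 2\,e(X)+\epsilon$, so it suffices to produce an effective upper bound for $e(X)$ of the required shape. For genus two, the complex estimate $\delta(X_\CC)\geq -186$ from \cite[Proposition~5.1(v)]{rvk:szpiro} together with the Noether formula of Faltings \cite{faltings:arithmeticsurfaces} and Moret-Bailly \cite{moba:noether} yields the clean inequality (\ref{eq:ex2}), $e(X)\leq 12h_F(J)+201$, where $J=\textnormal{Pic}^0(X)$. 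The problem therefore reduces to an effective upper bound for $h_F(J)$ of the form $\nu^{2d\nu}(N_SD_K)^{\nu}$ with $\nu=10^5 d$.

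Since every genus-two curve is hyperelliptic, $X\in\mathcal C$ and carries a degree-two morphism $\pi\colon X\to\mathbb P^1_K$ whose geometric branch locus is a set of six points. I would follow the template sketched in the introduction for Theorem~\ref{thm:cyclic}, but now channeling the output through \cite[Proposition~4.1(i)]{rvk:szpiro} instead of the Belyi-degree inequality (\ref{eq:ex1}). After a controlled base change $K'/K$ provided by \cite[Proposition~2.1]{jore:shafarevich} of de Jong--R\'emond, the six branch points are $K'$-rational, and \cite[Proposition~4.1(i)]{rvk:szpiro}, a Par{\v{s}}in--Oort style inequality, gives an explicit linear bound for $h_F(J)$ in terms of the heights $h(\lambda)$ of the cross-ratios of any four of these six points and of the invariants $D_{K'},N_{S'},[K':\Q]$. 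These invariants are in turn effectively controlled in terms of $D_K,N_S,d$ by the de Jong--R\'emond proposition.

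It remains to bound each $h(\lambda)$ effectively. This is the role of \cite[Proposition~6.1(ii)]{rvk:szpiro}, which rests on the theory of logarithmic forms and produces, for each cross-ratio, an effective upper bound polynomial in $(N_SD_K)$ with exponent polynomial in $d$. Plugging back through \cite[Proposition~4.1(i)]{rvk:szpiro} bounds $h_F(J)$; then (\ref{eq:ex2}) bounds $e(X)$; and finally (\ref{eq:sp}) bounds $\max(h_{NT}(x),h(x))$ for infinitely many $x\in X(\bar{K})$. Careful bookkeeping of the numerical constants $12$ and $201$, together with the polynomial dependence on $d$ coming from the logarithmic-forms step at fixed $g=2$, is what should produce precisely the shape $\nu^{2d\nu}(N_SD_K)^{\nu}$ with $\nu=10^5 d$ stated in the theorem.

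The main obstacle is the fine tracking of constants through the Baker-type step: the logarithmic-forms estimate is very sensitive to $[K':\Q]$ and to $|S'|$, so the base change supplied by \cite[Proposition~2.1]{jore:shafarevich} has to be as economical as possible, and the resulting exponents must be verified to come out no worse than $\nu=10^5 d$. A secondary subtlety is the passage between the Faltings height of $J$ over $K$ and over $K'$; the stability property of $h_F$ for semistable abelian varieties handles this, but a small additional base change (in the spirit of the remark following Conjecture~$(sp)$) may be needed to ensure that $J$ has semistable reduction, and the discriminant of that extension must also be absorbed into the stated exponents.
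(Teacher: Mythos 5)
Your proposal is correct and follows essentially the same route as the paper: reduce to $e(X)$ via Zhang's theorem and Lemma \ref{lem:nth}, reduce to $h_F(J)$ via the Noether formula together with the bound $-186\leq c_\delta(2)$, bound $h_F(J)$ by the Par{\v{s}}in--Oort--de Jong--R\'emond cross-ratio argument (inequality (\ref{eq:fhux})), and finally bound the cross-ratio heights $\mu_X$ by the theory of logarithmic forms (Lemma \ref{lem:ux}). The only minor quibble is that your worry about semistability of $J$ under base change is moot, since the absolute stable Faltings height is by definition invariant under field extension.
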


To state Proposition \ref{prop:abc} we need to recall the $abc$-conjecture  of Masser-Oesterl\'e \cite{masser:abc} over number fields. 
For any non-zero triple $\alpha,\beta,\gamma\in K$ we denote by $H(\alpha,\beta,\gamma)$ the usual absolute multiplicative Weil height  of the corresponding point in $\mathbb P^2(K)$, see  \cite[1.5.4]{bogu:diophantinegeometry}.
We define the height function $H_K=H^d$ and we write $S_K(\alpha,\beta,\gamma)=\prod N_v^{e_v}$ with the product extended over all finite places $v$ of $K$ such that $v(\alpha)$, $v(\beta)$ and $v(\gamma)$ are not all equal, where $e_v=v(p)$ for $p$ the residue characteristic of $v$. 
We mention that Masser \cite{masser:abc} added the ramification index $e_v$ in the definition of the support $S_K$ to obtain a natural behaviour of $S_K$ under finite field extensions. 

\vspace{0.3cm}
\noindent{\bf Conjecture ($abc$).} 
\emph{For any integer $n\geq 1$, and any real $r,\epsilon>1$, there is a constant $c$, which depends only on $n,r,\epsilon$, such that if $K$ is a number field of degree $[K:\QQ]\leq n$, and $\alpha,\beta,\gamma\in K$ are non-zero and satisfy $\alpha+\beta=\gamma$, then $H_K(\alpha,\beta,\gamma)\leq cS_K(\alpha,\beta,\gamma)^rD_K^\epsilon.$\vspace{0.3cm}}

The following proposition is conditional on Conjecture $(abc)$. It improves exponentially, in terms of $N_S$ and $D_K$, the inequalities in our theorems.
We put $u(g)=8^{(11g)^38^g}.$ 

\begin{proposition}\label{prop:abc}
Let $r,\epsilon>1$ be real numbers and write $$\Omega=(r+\frac{\epsilon}{d})\log N_S+\frac{\epsilon}{d}\log D_K.$$
Suppose Conjecture $(abc)$ holds for $n=24g^4d,r,\epsilon$ with the constant $c$.  Then there exist effective constants $c_1,c_2,c_3$, depending only on $c,r,\epsilon,d$ and $g$, such that for any curve $X$ over $K$ of genus $g$, with set of bad reduction places $S$, the following statements hold.
\begin{itemize}
\item[(i)] If $X\in\mathcal C$, then there are infinitely many points $\p\in X(\bar{K})$ that satisfy $$\log\max\bigl(h_{NT}(\p),h(\p)\bigl)\leq \nu^{\nu}\Omega+c_1.$$
\item[(ii)]   If  $X$ is a hyperelliptic curve over $K$, with set of Weierstrass points $\mathcal W$, then $$\sum_{\p\in \mathcal W}h_{NT}(\p)\leq u(g)(3g-1)(8g+4)\Omega+c_2.$$
\item[(iii)] If $g=2$, then there are infinitely many points $\p\in X(\bar{K})$ that satisfy $$\frac{1}{4}h_{NT}(\p)\leq h(\p)\leq  6u(2)\Omega+c_3.$$
\end{itemize}
\end{proposition}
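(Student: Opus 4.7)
The plan is to mimic the proofs of Theorem \ref{thm:cyclic}, Theorem \ref{thm:hyper}, and Theorem \ref{thm:genus2} for parts (i), (ii), and (iii) respectively, keeping the Arakelov-theoretic machinery intact and replacing only the single ingredient where the theory of logarithmic forms intervenes. In the unconditional theorems, logarithmic forms are used to bound certain heights---heights $h(\br)$ of cross-ratios of branch points in (i); heights of differences of Weierstrass points and the Faltings height $h_F(J)$ in (ii) and (iii)---polynomially in $N_SD_K$, and that step is the source of the huge exponent $\nu$. Here I instead bound these heights \emph{linearly} in $\Omega$ by applying Conjecture $(abc)$, and this is where the exponential improvement comes from.

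For part (i), any $X\in\mathcal C$ admits a finite $K$-morphism $X\to\PP^1_K$ of prime degree that is geometrically cyclic. After passing to an auxiliary finite extension $K'/K$ (of degree effectively bounded in terms of $d$ and $g$, using that the prime degree is at most $2g+1$ by Riemann-Hurwitz and that the branch locus and cyclic automorphism group split over $K'$), the branch points, and hence every cross-ratio $\br$ of four of them, lie in $K'$. Standard ramification theory combined with Grothendieck-Raynaud and Dedekind as in Section~\ref{sec:conj}, controls $[K':\QQ]\leq 24g^4d$, $D_{K'}$, and the set $S'$ of bad reduction places of $K'$ effectively in terms of $d,g,D_K,N_S$. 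Since $\br$ and $1-\br$ have good reduction outside $S'$, Conjecture $(abc)$ over $K'$ applied to the identity $\br+(1-\br)=1$ gives $h(\br)\leq\Omega+O(1)$ with an effective implicit constant depending only on $c,r,\epsilon,d,g$. Feeding this into Khadjavi's effective Belyi theorem \cite{khadjavi:belyi} bounds $\deg_B(X)$ polynomially in $\Omega$; then (\ref{eq:ex1}) bounds $e(X)$ polynomially in $\Omega$, and (\ref{eq:sp}) delivers the asserted inequality with coefficient $\nu^\nu$.

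For part (ii), I follow the proof of Theorem \ref{thm:hyper}: de Jong's formula \cite[Theorem 4.3]{dejong:weierstrasspoints} expresses $\sum_{\p\in\mathcal W}h_{NT}(\p)$ in terms of the hyperelliptic discriminant modular form, which \cite[Lemma 5.4]{rvk:szpiro} controls through the heights of pairwise differences $\br_i-\br_j$ of Weierstrass points in a suitable hyperelliptic model. Working over the extension $K'$ generated by these Weierstrass points (again with $[K':\QQ]\leq 24g^4d$), Conjecture $(abc)$ applied to $(\br_i-\br_k)+(\br_k-\br_j)=\br_i-\br_j$ yields $h(\br_i-\br_j)\leq\Omega+O(1)$ for every pair. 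Summing over the $(2g+2)(2g+1)/2$ pairs and tracking the constants in \cite[Lemma 5.4]{rvk:szpiro} produces the coefficient $u(g)(3g-1)(8g+4)$. For part (iii), I mimic the proof of Theorem \ref{thm:genus2}: the Noether formula together with $-186\leq\delta(X_\CC)$ yields (\ref{eq:ex2}), so it suffices to bound $h_F(J)$. The Par\v{s}in-Oort-de Jong-R\'emond method, refined in \cite[Propositions 4.1(i), 6.1(ii)]{rvk:szpiro}, reduces this to the same type of estimate for the differences of Weierstrass points of the genus-two hyperelliptic model, now handled by Conjecture $(abc)$ exactly as in part (ii). Combining with (\ref{eq:ex2}) and (\ref{eq:sp}) at $g=2$ produces the chain $h_{NT}(\p)\leq 4h(\p)\leq 6u(2)\Omega+c_3$.

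The main obstacle is the careful bookkeeping of the auxiliary extension $K'/K$: one must verify $[K':\QQ]\leq 24g^4d$ (this is precisely the source of the value $n=24g^4d$ at which Conjecture $(abc)$ is invoked in the hypothesis), bound $D_{K'}$ and $N_{S'}$ effectively in terms of the original data, and check that the $abc$-support $S_{K'}(\br,1-\br,1)$ (or the analogue in (ii), (iii)) is controlled by $N_{S'}$ via the good reduction of the branch/Weierstrass points outside $S'$. Once this bookkeeping is in place, substituting the linear-in-$\Omega$ $abc$-bound for the polynomial-in-$(N_SD_K)$ logarithmic-forms bound at this common reduction step converts the unconditional estimates of the three theorems into the stated conditional ones.
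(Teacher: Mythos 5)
Your overall strategy is exactly the paper's: the three unconditional theorems and the three parts of Proposition \ref{prop:abc} share all of their Arakelov-theoretic machinery and differ only in how the quantity $\mu_X$ of (\ref{def:ux}) --- the supremum of heights of cross-ratios of branch points, which solve $S(L,q)$-unit equations by \cite[Proposition 2.1]{jore:shafarevich} --- is bounded: by logarithmic forms in Lemma \ref{lem:ux} (i), or linearly in $\Omega$ via Conjecture $(abc)$ in Lemma \ref{lem:ux} (ii), where the bookkeeping of the auxiliary field $L\in\mathcal R$ (degree at most $24g^4d$, unramified outside $S(K,q)$) that you worry about is delegated to \cite[Proposition 6.1 (iii), Lemma 6.3]{rvk:szpiro}. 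Your parts (i) and (iii) match the paper's proofs. One slip in (i): Khadjavi's theorem bounds $\deg_B(X)$ polynomially in the \emph{multiplicative} height $H_\Lambda$, hence exponentially in $\mu_X\leq\Omega+O(1)$ (Lemma \ref{lem:degbux} reads $\log\deg_B(X)\leq\nu^{\nu/2}\mu_X$), not ``polynomially in $\Omega$''; this is why the logarithm sits on the left of the inequality in (i), and your stated conclusion is the correct one even though the intermediate claim is not.

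The genuine problem is your chain for part (ii). \cite[Lemma 5.4]{rvk:szpiro} is a purely archimedean estimate bounding $\lvert\Delta_g(\tau_\sigma)\det(\textnormal{im}\,\tau_\sigma)^{2a}\rvert$ by a constant depending only on $g$; it involves no heights of Weierstrass points and is not the source of $u(g)$. Its only role is to turn the analytic term in de Jong's formula into the additive constant $293g^5$ of Lemma \ref{lem:wpf}. The load-bearing arithmetic step you omit is $h_F(J)\leq u(g)\mu_X$, inequality (\ref{eq:fhux}) coming from R\'emond's height comparisons (the actual origin of $u(g)$); the correct chain is $\sum_{\p\in\mathcal W}h_{NT}(\p)\leq(3g-1)(8g+4)h_F(J)+293g^5\leq(3g-1)(8g+4)u(g)\mu_X+293g^5$, followed by the $abc$ bound on $\mu_X$. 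Without the Faltings height as intermediary, ``summing over pairs and tracking constants in Lemma 5.4'' does not yield any bound on $\sum h_{NT}(\p)$. Relatedly, applying $(abc)$ to $(\br_i-\br_k)+(\br_k-\br_j)=(\br_i-\br_j)$ bounds the height of the projective triple, i.e.\ of the cross-ratio $(\br_i-\br_k)/(\br_i-\br_j)$ --- which is precisely what enters $\mu_X$ --- and not the height of the individual difference $\br_i-\br_j$, which is not even model-independent. The fix is simply to route part (ii) through Lemma \ref{lem:wpf} and (\ref{eq:fhux}) exactly as you already do for $h_F(J)$ in part (iii).
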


We  note that the factor $u(g)$, which appears in Proposition \ref{prop:abc}, is not optimal. Its origin shall be explained after Proposition \ref{prop:cyclic}. Further, we point out that Proposition \ref{prop:abc} only requires the validity of $(abc)$ for some fixed $n,r,\epsilon$, instead of all  $n,r,\epsilon$, and $(abc)$ with fixed $n,r,\epsilon$ is often called ``weak $abc$ conjecture''. We also mention that  Elkies \cite{elkies:abcmordell} used  Belyi's theorem to show that  (effective) $(abc)$ implies (effective) Mordell. 
However, it is not clear if Conjecture ($\cc$) or Conjecture $\bb$  follows from the effective version of the Mordell conjecture which Elkies deduces from an effective $(abc)$.

In general, we conducted some effort to obtain constants reasonably close to the best that can be acquired with the present method of proof. However, to simplify the form of our inequalities we freely rounded off several of the numbers in our estimates.

\section{Self-intersection, Belyi degree and heights}\label{sec:curves}

In the first part of this section we give two lemmas which describe properties of the Belyi degree, and in the second part we collect results from Arakelov theory for arithmetic surfaces. We also prove a lemma which was used in Section \ref{sec:conj}.
Throughout this section we denote by $X$  a curve of genus $g\geq 2$, defined over a number field $K$.

If $L$, $\omega$ and $(\cdot ,\cdot)$ are as in Section \ref{sec:conj}, then  the stable self-intersection $e(X)$ of $\omega$ is the real number defined by
\begin{equation}\label{def:ex}
[L:\QQ]e(X)=(\omega,\omega).
\end{equation}
We observe that this definition does not depend on any choices. Let $D$ be the set of degrees of finite morphisms $X_{\bar{K}}\to \mathbb P^1_{\bar{K}}$ which are unramified outside $0,1,\infty$. Belyi's theorem \cite{belyi:theorem} shows that $D$ is non-empty. The Belyi degree $\deg_B(X)$ of $X$ is defined by
\begin{equation}\label{def:belyi}
 \deg_B(X)=\min D.
\end{equation}

Our proof of Theorem \ref{thm:cyclic} uses two fundamental properties of the Belyi degree. We now state the first of these properties in the following lemma.  

\begin{lemma}\label{lem:ex}
It holds
$e(X)\leq 10^8\deg_B(X)^5g$.
\end{lemma}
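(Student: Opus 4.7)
The plan is to invoke \cite[Theorem 1.1.1]{javanpeykar:belyi} directly; the inequality stated in the lemma is essentially the content of that theorem, and it is already flagged as such in the introduction (see \eqref{eq:ex1}). So my proof would consist simply of verifying that the standing hypothesis on $X$ (a smooth, projective, geometrically connected curve of genus $g \geq 2$ over a number field) matches the hypothesis of the cited theorem, which is immediate.

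If instead I had to reconstruct the bound from scratch, I would proceed as follows. First, fix a Belyi morphism $\beta\colon X_{\bar K}\to\PP^1_{\bar K}$ of degree $N=\deg_B(X)$, unramified outside $\{0,1,\infty\}$. This exhibits $X_\CC$ as a finite cover of the thrice-punctured sphere, which carries a complete hyperbolic metric of finite volume; pulling this metric back to $X_\CC$ produces a (singular) hyperbolic metric whose geometry is controlled explicitly by $N$ and $g$. I would split $e(X)$ into the archimedean contribution, measured by integrals of the Arakelov--Green function against $\mu_{Ar}$, and the non-archimedean contribution, supported at places of bad reduction.

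For the archimedean part, the strategy is to compare the Arakelov--Green function on $X_\CC$ with the hyperbolic Green function of the Belyi cover, and then to apply a Merkl-type uniform bound (as in Javanpeykar's work, building on Jorgenson--Kramer estimates). The crucial input is that the injectivity radius and spectral gap of a degree-$N$ cover of the thrice-punctured sphere are controlled polynomially in $N$, which produces an estimate of the form $(\omega,\omega)_\infty \leq \mathrm{poly}(N,g)$. For the non-archimedean part, one descends $\beta$ to a number field $L$ of controlled degree and observes that the vertical components of the minimal regular model of $X_L$, and their self-intersections, are controlled by the ramification data of $\beta$ and hence again by $N$ and $g$. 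Combining the two contributions and generously absorbing constants yields the stated inequality with the explicit constants $10^8$ and exponent $5$ in $\deg_B(X)$.

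The main obstacle in such a reconstruction is the archimedean estimate: producing a uniform upper bound for the Arakelov--Green function on arbitrary Belyi covers of the thrice-punctured sphere requires genuine analytic input on the geometry and spectral theory of hyperbolic Riemann surfaces, and is indeed the technical heart of \cite{javanpeykar:belyi}. Since this work has already been carried out there, the sensible course of action is to cite it, which is what I propose to do.
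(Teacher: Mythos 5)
Your proposal matches the paper exactly: the paper's entire proof of this lemma is the single sentence that the statement is proven in \cite[Theorem 1.1.1]{javanpeykar:belyi}, which is precisely the citation you propose to make. Your additional sketch of how that theorem is proved is a reasonable outline but is not needed here, since the lemma is genuinely just an invocation of that external result.
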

\begin{proof}The statement is proven in \cite[Theorem 1.1.1]{javanpeykar:belyi}. \end{proof}

The next lemma gives the second property. It is a consequence of an effective version  of Belyi's theorem \cite{belyi:theorem} worked out by Khadjavi  \cite{khadjavi:belyi}. We denote by $H(\alpha)$  the usual absolute multiplicative Weil height of $\alpha\in \mathbb P^1(\bar{K})$, defined in \cite[1.5.4]{bogu:diophantinegeometry}. Then we define the height $H_\Lambda$ of a subset $\Br$ of $\mathbb P^1(\bar{K})$ by 
$H_\Br=\sup\{H(\br),\br\in\Br\}$.  
\begin{lemma}\label{lem:belyi}
If $\varphi:X\to \mathbb P^1_K$ is a finite morphism, with set of (geometric) branch points $\Br\subset \mathbb P^1(\bar{K})$ and if $\uu=4[K:\QQ](\deg (\varphi)+g-1)^2$, then 
$$\deg_{B}(X) \leq (4\uu H_\Br)^{9\uu^3 2^{\uu-2}\uu!}\deg(\varphi).$$
\end{lemma}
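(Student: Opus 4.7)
The plan is to apply Khadjavi's effective version of Belyi's theorem to a Galois-closed enlargement of $\Br$ and then compose the resulting Belyi map on $\pl$ with $\varphi$; this will exhibit a morphism $X\to\pl$ unramified outside $\{0,1,\infty\}$ whose degree takes the desired form. The starting point is a Riemann--Hurwitz count applied to $\varphi$, which yields $|\Br|\leq 2(\deg(\varphi)+g-1)$.

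Next, since $\varphi$ is defined over $K$, the set $\Br\subset\pl(\bar K)$ is stable under $\Gal(\bar K/K)$, so each $\br\in\Br$ satisfies $[K(\br):K]\leq|\Br|$, and hence $[\QQ(\br):\QQ]\leq d\,|\Br|$. Let $\tilde{\Br}\subset\pl(\bar{\QQ})$ denote the union of all $\Gal(\bar{\QQ}/\QQ)$-conjugates of points of $\Br$. Combining the two bounds gives
\[
|\tilde{\Br}|\leq|\Br|\cdot d\,|\Br|\leq 4d(\deg(\varphi)+g-1)^2=\uu,
\]
while $H_{\tilde{\Br}}=H_{\Br}$ by Galois invariance of the absolute Weil height.

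I would then apply Khadjavi's theorem \cite{khadjavi:belyi} to the $\QQ$-rational finite set $\tilde{\Br}$ of cardinality $\leq\uu$ and height $\leq H_{\Br}$, obtaining a morphism $\psi:\pl_\QQ\to\pl_\QQ$ which is unramified outside $\{0,1,\infty\}$, satisfies $\psi(\tilde{\Br})\subseteq\{0,1,\infty\}$, and has degree at most $(4\uu H_{\Br})^{9\uu^3 2^{\uu-2}\uu!}$. The composition $\psi\circ\varphi$ then has branch locus contained in $\textnormal{br}(\psi)\cup\psi(\Br)\subseteq\{0,1,\infty\}$, so by the definition of $\deg_B(X)$,
\[
\deg_B(X)\leq\deg(\psi\circ\varphi)=\deg(\psi)\deg(\varphi)\leq(4\uu H_{\Br})^{9\uu^3 2^{\uu-2}\uu!}\deg(\varphi).
\]

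The main obstacle is the external input: one must trace the proof of \cite{khadjavi:belyi} carefully enough to check that its estimate specialises to exactly the exponent $9\uu^3 2^{\uu-2}\uu!$ once $\uu$ is identified with the size of the Galois-closed set $\tilde{\Br}$. Once that dependence is secured, the Riemann--Hurwitz count, the orbit-size argument, and the branch-locus analysis for the composition are all routine bookkeeping.
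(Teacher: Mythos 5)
Your proposal is correct and follows essentially the same route as the paper: a Hurwitz count bounding $\lvert\Br\rvert$ and the degrees $[K(\br):K]$, passage to the Galois orbit $\Gal(\bar{\QQ}/\QQ)\cdot\Br$ of cardinality at most $\uu$ with the same height by Galois invariance, an application of Khadjavi's effective Belyi theorem to that Galois-stable set, and composition with $\varphi$. The exponent $9\uu^3 2^{\uu-2}\uu!$ is exactly the bound of \cite[Theorem 1.1]{khadjavi:belyi} for a Galois-stable set of size $\uu$, so no further tracing of that proof is needed.
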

\begin{proof} 
The absolute Galois group $\textnormal{Gal}(\bar{\QQ}/\QQ)$ of $\bar{\QQ}$ over $\QQ$ acts in the usual way on $\mathbb P^1(\bar{\QQ})\cong\mathbb P^1(\bar{K})$. Let $\Br'=\textnormal{Gal}(\bar{\QQ}/\QQ)\cdot \Br$ be the image of $\Br$ under this action. The classical result of Hurwitz \cite[Theorem 7.4.16]{liu:ag} implies that  $[K(\br):K]$ and $\lvert \Br\rvert$ are at most $2g-2+2\deg(\varphi)$ for $K(\lambda)$  the field of definition of $\br\in \Br$. This gives  $\lvert\Br'\rvert\leq \uu$, and the Galois invariance \cite[1.5.17]{bogu:diophantinegeometry} of $H$ shows  $H_\Br=H_{\Br'}$.
Then an application of \cite[Theorem 1.1]{khadjavi:belyi} with the Galois stable set $\Br'$ gives a finite morphism $\psi:\mathbb P^1_{\bar{K}}\to \mathbb P^1_{\bar{K}}$ with the following properties. The morphism $\psi$ is unramified outside $0,1,\infty$, with
$\psi(\Br)\subseteq \{0,1,\infty\}$ and
$$\deg(\psi)\leq (4\uu H_\Br)^{9\uu^3 2^{\uu-2}\uu !}.$$
We observe that the composition $\psi\circ\varphi: X_{\bar{K}}\to \mathbb P^1_{\bar{K}}$ is unramified outside $0,1,\infty$. This shows  $\deg_B(X)\leq \deg(\psi)\deg(\varphi)$ and then the displayed inequality implies Lemma \ref{lem:belyi}. 
\end{proof}

We mention that the bound in Khadjavi's \cite[Theorem 1.1]{khadjavi:belyi}, and thus Lemma \ref{lem:belyi}, can be improved in special cases. See for example Li{\c{t}}canu \cite{litcanu:belyi}.

Let $h$ be the stable Arakelov height from (\ref{def:h}). To obtain infinitely many small points we shall use the following lemma which relies on a fundamental result of Zhang \cite{zhang:positive}. 

\begin{lemma}\label{lem:zhang}
If $\epsilon>0$ is a real number, then there are infinitely many $\p\in X(\bar{K})$ with $$2(g-1)h(\p)\leq e(X)+\epsilon.$$
\end{lemma}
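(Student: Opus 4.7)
The plan is to combine Zhang's theorem on the essential minimum of the Arakelov height with a local comparison between the Arakelov and the admissible self-intersection of the dualizing sheaf. Let $L$, $\mathcal X/B$, $\omega$ and $h$ be as in Section~\ref{sec:conj}, and define the essential minimum of $h$ by
$$e_1(h) = \sup_{F}\, \inf_{x \in X(\bar{K}) \setminus F} h(x),$$
with $F$ ranging over the finite subsets of $X(\bar{K})$. Unwinding this definition shows that, once we have the bound
$$e_1(h) \leq \frac{e(X)}{2(g-1)},$$
the asserted infinitely many algebraic points can be produced inductively: at each stage enlarge $F$ by the points already chosen and pick a new $x \in X(\bar{K}) \setminus F$ with $h(x) \leq e_1(h) + \epsilon/(2(g-1))$, which is equivalent to $2(g-1)h(x) \leq e(X) + \epsilon$. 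So the task reduces to establishing this upper bound on $e_1(h)$.

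The main input is Zhang's theorem from \cite{zhang:positive}, which in our semistable setting identifies
$$e_1(h) = \frac{(\omega,\omega)_a}{2(g-1)[L:\QQ]},$$
where $(\omega,\omega)_a$ denotes Zhang's admissible self-intersection of $\omega$. Only the $\leq$ direction is needed here; it is the easier one, deduced from the arithmetic Hilbert--Samuel theorem for $\omega^{\otimes n}$, which for large $n$ furnishes plenty of global sections with bounded sup-norms whose effective divisors on the generic fibre supply algebraic points of small Arakelov height.

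To bridge from the admissible to the Arakelov self-intersection I would invoke the local decomposition
$$(\omega,\omega) = (\omega,\omega)_a + \sum_{v \text{ finite}} \epsilon(X_v)\, \log N_v,$$
the sum running over the finite places of $L$, in which Zhang's local $\epsilon$-invariants $\epsilon(X_v)$ of the reduction graphs are non-negative thanks to the semistability of $\mathcal X/B$. Together with the identity $(\omega,\omega) = [L:\QQ]\, e(X)$ from (\ref{def:ex}), this yields $(\omega,\omega)_a \leq [L:\QQ]\, e(X)$, hence the sought bound $e_1(h) \leq e(X)/(2(g-1))$.

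The delicate part is locating and citing the correct direction of the essential minimum identity in \cite{zhang:positive}: Zhang's celebrated positivity result gives the lower bound $e_1(h) \geq (\omega,\omega)_a/(2(g-1)[L:\QQ])$, whereas here the easier upper bound is what is needed, together with the inductive extraction of an \emph{infinite} sequence of distinct algebraic points rather than just one.
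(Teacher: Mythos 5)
Your overall strategy --- reduce the lemma to an upper bound on the essential minimum $e_1(h)$ and appeal to Zhang --- is the same as the paper's, and your inductive extraction of infinitely many points from such a bound is fine. The problems are in how you propose to obtain $e_1(h)\leq e(X)/(2(g-1))$. There is no ``essential minimum identity'' in \cite{zhang:positive}: for the relatively semipositive Arakelov-metrized $\omega$ on a semistable minimal model, Zhang's theorem of successive minima gives the two inequalities
$$e_1(h)\;\geq\;\frac{(\omega,\omega)}{2[L:\QQ](2g-2)}\;=\;\frac{e(X)}{4(g-1)},\qquad \frac{e(X)}{4(g-1)}\;\geq\;\frac{e_1(h)+e_2(h)}{2},$$
where $e_2(h)=\inf_{x\in X(\bar K)}h(x)$ is the absolute minimum; the resulting lower and upper bounds for $e_1(h)$ differ by a factor of $2$, so no equality is available. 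More seriously, you have the two directions reversed: the arithmetic Hilbert--Samuel theorem (abundance of small sections) is what proves the \emph{lower} bound for $e_1$ --- the direction behind Bogomolov-type statements --- and is of no help here. The upper bound comes from the second inequality, and it only yields $e_1(h)\leq e(X)/(2(g-1))$ \emph{after} one knows $e_2(h)\geq 0$, i.e.\ that $h(x)\geq 0$ for every $x\in X(\bar K)$. You never verify this, and it is not formal: the paper deduces it from Lemma \ref{lem:nth} (i) (hence ultimately from $h_{NT}\geq 0$ together with Faltings' inequalities $(\omega,\omega)\geq 0$ and $(\Phi,\Phi)\leq 0$). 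Your alternative sketch via zero divisors of small sections hits the same obstruction: to produce, for an arbitrary finite set $F$, a small point outside $F$, you must prevent the zeros lying in $F$ from contributing a large negative amount to $(\operatorname{div}(s),\omega)$, which again requires $h\geq 0$.

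The detour through the admissible pairing is also both unnecessary and incomplete: even granting the decomposition $(\omega,\omega)=(\omega,\omega)_a+\sum_v\epsilon(X_v)\log N_v$ with non-negative local terms, Zhang's results for the admissible metric control the essential minimum of the \emph{admissible} height of points, which differs from the Arakelov height $h$ of (\ref{def:h}) by local contributions from the reduction graphs, so a further height comparison would be needed. The paper avoids all of this by applying \cite[Theorem 6.3]{zhang:positive} directly to the Arakelov-metrized $\omega$, whose hypotheses --- relative semipositivity on the semistable minimal model and $h(x)\geq 0$ for all $x$ --- are exactly what Faltings' results and Lemma \ref{lem:nth} (i) supply.
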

\begin{proof} It follows for example from Lemma \ref{lem:nth} (i) below that any $x\in X(\bar{K})$ satisfies $h(x)\geq 0$. Then we see that \cite[Theorem 6.3]{zhang:positive} implies the statement.\end{proof}

We remark that Moret-Bailly showed that for any real $\epsilon>0$, there exists an algebraic point $x\in X(\bar{K})$ which satisfies  $4(g-1)h(x)\leq e(X)+\epsilon$. For details we refer to the proof of \cite[Proposition 3.4]{moret-bailly:effmordell} which uses  \cite[Corollary, p.406]{faltings:arithmeticsurfaces} of Faltings.

The following lemma is a direct consequence of classical results of Arakelov \cite{arakelov:intersectiontheory}, Faltings \cite{faltings:arithmeticsurfaces} and Moret-Bailly \cite{moba:noether}. To state this lemma we introduce further notation. For any embedding $\sigma:K\hookrightarrow \CC$, we denote by $X_\sigma$ the compact connected Riemann surface which corresponds to the base change of $X$ to $\mathbb C$ with respect to $\sigma$. Let
$
\delta(X_\sigma)
$
be Faltings' delta invariant of $X_\sigma$, defined in \cite[p.402]{faltings:arithmeticsurfaces}. We denote by $M_g(\CC)$  the moduli space of smooth, projective and connected curves over $\CC$ of genus $g$. Faltings' delta invariant, viewed as a function $M_g(\CC)\to \mathbb R$, has a minimum which we denote by 
\begin{equation}\label{def:delta}
c_\delta(g).
\end{equation}
We mention that if $g\geq 3$, then effective lower bounds for $c_\delta(g)$  are not known. As above, we denote by $h_F(J)$ the absolute stable Faltings height of the Jacobian $J=\textnormal{Pic}^0(X)$ of $X$ and we let $h_{NT}$ be the N\'eron-Tate height defined in (\ref{def:nt}).

\begin{lemma}\label{lem:nth}
The following statements hold.
\begin{itemize}
\item[(i)] Any $\p\in X(\bar{K})$ satisfies $h_{NT}(\p)\leq 2g(g-1)h(\p).$
\item[(ii)] It holds $e(X)+c_\delta(g)\leq 12h_F(J)+4g\log(2\pi).$
\end{itemize}
\end{lemma}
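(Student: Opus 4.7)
For (i), the strategy is to translate the N\'eron--Tate height into an Arakelov self-intersection via the Faltings--Hriljac theorem and then exploit the adjunction formula. Choose a finite extension $L/K$ over which $X$ has semi-stable reduction and which contains the field of definition of $\p$, and write $\bar \p$ and $\bar\omega$ for the corresponding Arakelov divisors on the minimal regular model $\mathcal X/\mathcal O_L$. By the definition \eqref{def:nt}, $h_{NT}(\p)$ is the N\'eron--Tate height of the degree-zero class $(2g-2)\p - \Omega^1$, and the Faltings--Hriljac theorem expresses this as
\[
h_{NT}(\p) \;=\; -\frac{1}{2[L:\QQ]}\bigl((2g-2)\bar \p - \bar\omega,\;(2g-2)\bar \p - \bar\omega\bigr).
\]
Expanding the bilinear form and applying Arakelov's adjunction $(\bar \p,\bar \p) = -(\bar\omega,\bar \p)$ together with the definitions \eqref{def:h} and \eqref{def:ex}, the right-hand side collapses, via the identity $(2g-2)^2 + 2(2g-2) = 4g(g-1)$, to
\[
h_{NT}(\p) \;=\; 2g(g-1)\,h(\p) \,-\, \tfrac{1}{2}\,e(X).
\]
Part (i) then follows from Faltings' positivity $e(X) \geq 0$, established in \cite{faltings:arithmeticsurfaces} for semi-stable arithmetic surfaces of genus at least two.

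For (ii), the plan is to apply the arithmetic Noether formula of Faltings \cite{faltings:arithmeticsurfaces} and Moret-Bailly \cite{moba:noether} to $\mathcal X/\mathcal O_L$:
\[
12\,[L:\QQ]\,h_F(J) \;=\; (\bar\omega,\bar\omega) \,+\, \sum_{v\text{ finite}} \delta_v \log N_v \,+\, \sum_{\sigma: L \hookrightarrow \CC} \delta(X_\sigma) \,-\, 4g[L:\QQ]\log(2\pi).
\]
After dividing by $[L:\QQ]$, the left-hand side is independent of $L$ by the stability of $h_F$, the term $(\bar\omega,\bar\omega)/[L:\QQ]$ is exactly $e(X)$ by \eqref{def:ex}, the local contributions $\delta_v$ are nonnegative by the semi-stability of $\mathcal X$, and every archimedean invariant satisfies $\delta(X_\sigma) \geq c_\delta(g)$ by the definition \eqref{def:delta}. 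Discarding the nonnegative local terms and bounding each archimedean $\delta(X_\sigma)$ from below by $c_\delta(g)$ yields
\[
12\, h_F(J) + 4g\log(2\pi) \;\geq\; e(X) + c_\delta(g),
\]
which is (ii).

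The main piece of bookkeeping, and the only real obstacle, is to verify that dividing the Noether and Faltings--Hriljac identities by $[L:\QQ]$ produces quantities independent of the auxiliary extension $L$: this relies on the stability of $h_F$, on the fact that every embedding $\tau:K \hookrightarrow \CC$ lifts to exactly $[L:K]$ embeddings $L \hookrightarrow \CC$ yielding isomorphic compact Riemann surfaces (so that the normalized archimedean sum is compatible with the lower bound $c_\delta(g)$), and on the base-change compatibility of the minimal regular model under a semi-stable extension. These routine checks, together with Faltings' inequality $e(X)\geq 0$, are the substance of the proof.
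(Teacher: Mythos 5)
Part (ii) of your proposal is correct and coincides with the paper's argument: Moret-Bailly's refinement of the Noether formula, nonnegativity of the stable discriminant $\Delta(X)$, and the bound $\delta(X_\sigma)\geq c_\delta(g)$ for each archimedean place. No issues there.

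In part (i) there is a genuine gap in your application of the Faltings--Hriljac theorem. That theorem computes the N\'eron--Tate height of a degree-zero class on the generic fiber as $-\frac{1}{2[L:\QQ]}(D,D)$ only when the Arakelov divisor $D$ representing the class is orthogonal to every irreducible component of every fiber of $\mathcal X\to B$. The naive extension $(2g-2)\p-\omega$ does not have this property as soon as some fiber is reducible, which is precisely the case of interest (semi-stable bad reduction). One must first add the unique vertical $\QQ$-Cartier divisor $\Phi$ with $((2g-2)\p-\omega+\Phi,\Gamma)=0$ for all fibral components $\Gamma$; this is what the paper does, and the resulting identity is
\begin{equation*}
2h_{NT}(\p)=-e(X)+4g(g-1)h(\p)+\frac{1}{[L:\QQ]}(\Phi,\Phi),
\end{equation*}
not your claimed exact identity $h_{NT}(\p)=2g(g-1)h(\p)-\tfrac{1}{2}e(X)$, which omits the term $\frac{1}{2[L:\QQ]}(\Phi,\Phi)$ and is false in general. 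You are rescued by the sign: the intersection form on the components of a fiber is negative semi-definite (Zariski's lemma), so $(\Phi,\Phi)\leq 0$, and combining this with $e(X)\geq 0$ from \cite[Theorem 5.a)]{faltings:arithmeticsurfaces} still yields $h_{NT}(\p)\leq 2g(g-1)h(\p)$. So your conclusion is correct, but the argument only becomes valid once you insert the fibral correction $\Phi$ and record that its self-intersection is nonpositive.
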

\begin{proof}
To show statement (i) we take $\p\in X(\bar{K})$. Let $L$, $\mathcal X\to B$, $\omega$ and $(\cdot,\cdot)$ be as in (\ref{def:h}). We identify $\p$ with the corresponding Arakelov divisor on $\mathcal X$. Let $\Phi$ be the (unique) vertical $\QQ$-Cartier divisor on $\mathcal X$ such that the supports of $\Phi$ and $\p(B)$ are disjoint and any irreducible component $\Gamma$ of any fiber of $\mathcal X\to B$ satisfies $((2g-2)x-\omega+\Phi,\Gamma)=0$. 
Szpiro \cite[p.276]{szpiro:effective} observed that the Arakelov adjunction formula in \cite{arakelov:intersectiontheory} together with  \cite[Theorem 4.c)]{faltings:arithmeticsurfaces} leads to
$$2h_{NT}(\p)=-e(X)+4g(g-1)h(x)+\frac{1}{[L:\QQ]}(\Phi,\Phi).$$
Further, \cite[Theorem 5.a)]{faltings:arithmeticsurfaces} gives $-e(X)\leq 0$. Thus $(\Phi,\Phi)\leq 0$ implies (i).

We now prove (ii). If $v$ is a closed point  of $B$, then we denote by $\delta_v$ the number of singular points of the geometric special fiber of $\mathcal X$ over $v$. The (logarithmic) stable discriminant $\Delta(X)$ of $X$ is the real number defined by 
\begin{equation}\label{def:discriminant}
[L:\QQ]\Delta(X) =\sum\delta_v\log N_v
\end{equation}
with the sum taken over all closed points $v$ of $B$. Moret-Bailly's refinement \cite[Th\'eor\`eme 2.5]{moba:noether} of the Noether formula \cite[Theorem 6]{faltings:arithmeticsurfaces} implies
$$
12h_F(J)=\Delta(X)+e(X)-4g\log(2\pi)+\frac{1}{[L:\QQ]}\sum \delta(X_\sigma)
$$
with the sum taken over all embeddings $\sigma:L\hookrightarrow \mathbb C$. 
Then the estimates $\Delta(X)\geq 0$ and $\sum \delta(X_\sigma)
\geq [L:\QQ]c_\delta(g)$ prove (ii). This completes the proof of Lemma \ref{lem:nth}. 
\end{proof}

The final lemma of this section proves two results which were used in our discussion of Conjecture $(\cc)$ and $\bb$ in Section \ref{sec:conj}. Let $S$ be the set of finite places of $K$ where $X$ has bad reduction, and let $D_K$, $d$ and $N_S$ be the quantities defined in (\ref{def:para}).

\begin{lemma}\label{lem:conj}
Suppose $\epsilon>0$ is a real number. Then the following statements hold.
\begin{itemize}
\item[(i)] For any $x_0\in X(\bar{K})$, there are infinitely many $x\in X(\bar{K})$ that satisfy $$h_{NT}(x)\leq 4g^2(g-1)h(x_0)+\epsilon.$$
\item[(ii)] Suppose that Conjecture $(h)$ holds for $\epsilon$ and $n=dg$ with the constant $c$. Then there is an effective constant  $\kappa$, depending only on $c,\epsilon,d,g$, with the following properties. There exists $x_0\in X(\bar{K})$ that satisfies 
$$h(x_0)\leq \frac{3g}{g-1}\bigl(g+\frac{2\epsilon}{d}\bigl)(\log N_S+\log D_K)-\frac{c_\delta(g)}{4(g-1)}+\kappa$$
and there exist infinitely many points $x\in X(\bar{K})$ that satisfy $$h_{NT}(x)\leq 12g^2\bigl(g+\frac{2\epsilon}{d}\bigl)(\log N_S+\log D_K)-gc_\delta(g)+\kappa.$$
\end{itemize}
\end{lemma}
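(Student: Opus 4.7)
For part (i), my plan is to exploit the Szpiro identity
$$2h_{NT}(y) = -e(X) + 4g(g-1)h(y) + \frac{1}{[L:\QQ]}(\Phi,\Phi)$$
that appears in the proof of Lemma \ref{lem:nth} (i). Specialising it at $y = x_0$ and using $h_{NT}(x_0) \geq 0$ together with $(\Phi,\Phi) \leq 0$ yields the key bound $e(X) \leq 4g(g-1) h(x_0)$. I would then apply Lemma \ref{lem:zhang} with parameter $\epsilon/g$ to produce infinitely many $x \in X(\bar{K})$ satisfying $2(g-1) h(x) \leq e(X) + \epsilon/g$, and Lemma \ref{lem:nth} (i) upgrades this to
$$h_{NT}(x) \leq 2g(g-1) h(x) \leq g\, e(X) + \epsilon \leq 4g^2(g-1) h(x_0) + \epsilon,$$
which is (i).

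For part (ii) I would pass to the Weil restriction $A = \textnormal{Res}_{K/\QQ}(J)$, an abelian variety over $\QQ$ of dimension $n = dg$. Two standard facts about Weil restriction drive the argument. Over $\bar{\QQ}$ one has the decomposition $A_{\bar{\QQ}} \cong \prod_\sigma J^\sigma_{\bar{\QQ}}$, indexed by the $d$ embeddings $\sigma : K \hookrightarrow \bar{\QQ}$, so additivity of the Faltings height on products and its Galois invariance give $h_F(A) = d\, h_F(J)$; and the conductor-discriminant formula $N_A = D_K^{2g} \cdot N_{K/\QQ}(\mathfrak c_J)$, combined with the bound $f_v \leq 2g$ for the conductor exponents of an abelian variety of dimension $g$ at places of bad reduction, yields $\log N_A \leq 2g(\log N_S + \log D_K)$. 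Applying the assumed instance of Conjecture $(h)$ to $A$ and inserting these two identifications then gives an effective $\kappa_0 = \kappa_0(c,\epsilon,d,g)$ with
$$h_F(J) \leq g \bigl(g + \tfrac{2\epsilon}{d}\bigl)(\log N_S + \log D_K) + \kappa_0,$$
which via Lemma \ref{lem:nth} (ii) translates into an effective upper bound for $e(X)$ of the same shape, with the extra term $-c_\delta(g)$.

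To finish (ii), for the first assertion I would invoke the improvement of Lemma \ref{lem:zhang} recorded in the remark following that lemma, namely Moret-Bailly's existence of some $x_0 \in X(\bar{K})$ satisfying $4(g-1) h(x_0) \leq e(X) + 1$; dividing the $e(X)$-bound by $4(g-1)$ produces the stated inequality, in particular accounting for the factor $1/(4(g-1))$ in front of $-c_\delta(g)$. For the $h_{NT}$ assertion I would instead apply Lemma \ref{lem:zhang} itself to obtain infinitely many $x \in X(\bar{K})$ with $2(g-1) h(x) \leq e(X) + 1$, so that Lemma \ref{lem:nth} (i) gives $h_{NT}(x) \leq g\, e(X) + g$; substituting the $e(X)$-bound yields the desired estimate with leading coefficient $12 g^2$ and contribution $-g\, c_\delta(g)$. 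The only delicate step is the Weil restriction computation in paragraph two; once $h_F(A) = d\, h_F(J)$ and the conductor-discriminant estimate are in hand, the rest of the argument is linear bookkeeping with the ingredients of Section \ref{sec:curves}.
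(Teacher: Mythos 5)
Your argument follows essentially the same route as the paper's: in part (i) you re-derive Faltings' bound $e(X)\leq 4g(g-1)h(x_0)$ from the Szpiro identity in the proof of Lemma \ref{lem:nth} (i) instead of citing \cite[Theorem 5.b)]{faltings:arithmeticsurfaces} directly, and part (ii) is the identical Weil-restriction argument combined with Lemma \ref{lem:nth} (ii), Zhang's theorem and Moret-Bailly's existence of a point with $4(g-1)h(x_0)\leq e(X)+\epsilon$. The only imprecise step is your assertion that the conductor exponents of a $g$-dimensional abelian variety are bounded by $2g$ at all bad places: this fails at wildly ramified places of small residue characteristic, and the correct input is the theorem of Lockhart--Rosen--Silverman \cite[Theorem 0.1]{lorosi:conductor} used in the paper, which gives $N_J\leq \kappa_1 N_S^{2g}$ with $\kappa_1$ effective in $d$ and $g$; since this only perturbs the additive constant $\kappa$, your conclusion is unaffected.
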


\begin{proof}
We first prove (i). On combining Lemma  \ref{lem:zhang} with Lemma \ref{lem:nth} (i), we obtain infinitely many points $x\in X(\bar{K})$ that satisfy 
\begin{equation}\label{eq:infpoints}
h_{NT}(x)\leq ge(X)+\epsilon.
\end{equation}
Furthermore, \cite[Theorem 5.b)]{faltings:arithmeticsurfaces} gives that any $x_0\in X(\bar{K})$ satisfies the inequality $e(X)\leq 4g(g-1)h(x_0)$. Hence we conclude assertion (i).

To show (ii) we may and do assume that Conjecture $(h)$ holds for $\epsilon$ and $n=dg$ with the constant $c$. The Weil restriction $A=\textnormal{Res}_{K/\QQ}(J)$ of the Jacobian $J$ of $X$ is an abelian variety over $\QQ$ of dimension $n$, which is geometrically isomorphic to $\prod J^\sigma$. Here the product is taken over all embeddings $\sigma:K\hookrightarrow \bar{K}$ and $J^\sigma$ is the base change of $J$ to $\bar{K}$ with respect to $\sigma$. 
Furthermore, the Galois invariance $h_F(J)=h_F(J^\sigma)$ implies that  $h_F(A)=dh_F(J)$. Thus an application of Conjecture $(h)$ with $A$ gives 
$$dh_F(J)\leq \bigl(\frac{n}{2}+\epsilon\bigl)\log N_A+c.$$
Let $N_J$ be the norm from $K$ to $\QQ$ of the usual conductor ideal of $J$ over $K$. 
Milne \cite[Proposition 1]{milne:arithmetic} showed that $N_A=N_JD_K^{2g}$, and Lockhart-Rosen-Silverman \cite[Theorem 0.1]{lorosi:conductor} implies that $N_J\leq \kappa_1N_S^{2g}$ for $\kappa_1$ an effective constant depending only on $d,g$. 
Then Lemma \ref{lem:nth} (ii) together with the displayed upper bound for $dh_F(J)$ leads to 
$$e(X)\leq 12g\bigl(g+\frac{2\epsilon}{d}\bigl)(\log N_S+\log D_K)-c_\delta(g)+\kappa_2,$$
where $\kappa_2$ is an effective constant depending only on $c,\epsilon,d$ and $g$. There exists an algebraic point $x_0\in X(\bar{K})$ with $4(g-1)h(x_0)\leq e(X)+\epsilon$, see the proof of \cite[Proposition 3.4]{moret-bailly:effmordell}. Therefore the  displayed upper bound for $e(X)$ together with (\ref{eq:infpoints}) implies assertion (ii). This completes the proof of Lemma \ref{lem:conj}.
\end{proof}

We conclude this section by the following remarks. Suppose there exists a finite morphism $\varphi: X\to \mathbb P^1_{K}$, with $\deg(\varphi)$ and $H_\Lambda$ effectively bounded in terms of $K$, $S$ and $g$, where $H_\Lambda$ is the height of the set $\Lambda$ of (geometric) branch points of $\varphi$. Then Lemma \ref{lem:ex}, Lemma \ref{lem:belyi} and Lemma \ref{lem:zhang} show that $X$ has infinitely many ``small points'', and therefore $X$ satisfies in particular Szpiro's small points conjecture. 

Similarly, if $\deg_B(X)$ is effectively bounded in terms of $K$, $S$ and $g$, then Lemma \ref{lem:ex} and Lemma \ref{lem:zhang} show that $X$ has infinitely many ``small points''. For example, if the base change of $X$ to $\CC$, with respect to some embedding $K\hookrightarrow \CC$, is a classical congruence modular curve, then a result of Zograf in \cite{zograf:rademacher} gives $\deg_B(X)\leq 128(g+1)$.

\section{Cyclic covers of prime degree}\label{sec:cyclic}

In this section we prove Theorem \ref{thm:cyclic} and Proposition \ref{prop:abc} (i).  We also give Proposition \ref{prop:cyclic} which may be of independent interest. It improves the inequalities in Theorem \ref{thm:cyclic} and Proposition \ref{prop:abc} (i), with the disadvantage inherent that it involves a constant which is not known to be effective.

Let $X$ be a curve over a number field $K$ of genus $g\geq 2$, let $S$ be the set of finite places of $K$ where $X$ has bad reduction and let $\mathcal C=\mathcal C(K)$ be the set of cyclic covers of prime degree from Section \ref{sec:results}. In this section we assume throughout that $X\in\mathcal C$.

We now give two lemmas which will be used in our proof of Theorem \ref{thm:cyclic}. 
To state and prove these lemmas we have to introduce some notation. 
Our assumption that $X\in \mathcal C$ provides a finite morphism $\varphi: X\to \mathbb P^1_K$ which is geometrically a cyclic cover of prime degree. 
Let $q$ be the degree of $\varphi$ and let $L$ be a finite extension  of $K$. 
We denote by $U=S(L,q)$ the set of places of $L$ which divide $q$ or a place in $S$. 
Let $\mathcal O_U^{\times}$ be the $U$-units in $L$ and let $h(\alpha)$ be the
 usual absolute logarithmic Weil height of $\alpha\in L$, defined in \cite[1.6.1]{bogu:diophantinegeometry}. 
We define the quantity $\mu_{U}=\sup(h(\lambda), \lambda \in \mathcal O_U^\times \textnormal{ and }  1-\lambda \in \mathcal O_U^{\times})$. 
Let $\mathcal R$ be the set of field extensions $L$ of $K$ such that $L$ is the
compositum of the fields of definition of four distinct (geometric) ramification points of $\varphi$. 
Then we define 
\begin{equation}\label{def:ux}
\mu_X=\sup(1,\mu_{S(L,q)})
\end{equation}
with the supremum taken over all fields $L\in \mathcal R$. Let $\deg_B(X)$ be the Belyi degree of $X$, defined in (\ref{def:belyi}), and let $d$ be the degree of $K$ over $\QQ$.
\begin{lemma}\label{lem:degbux}
If $\nu=d(5g)^5$, then $\log\deg_{B}(X) \leq \nu^{\nu/2}\mu_X$.
\end{lemma}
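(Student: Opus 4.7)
The plan is to reduce the bound on $\deg_B(X)$ to Lemma \ref{lem:belyi} after a Möbius change of coordinates on the target $\mathbb P^1$, exploiting the special structure of a cyclic cover of prime degree: after sending three branch points to $0,1,\infty$, the remaining branch points become cross-ratios of four ramification points, whose heights are precisely the data controlled by $\mu_X$. First I would apply Riemann--Hurwitz to the morphism $\varphi: X \to \mathbb P^1_K$ of prime degree $q$, which is geometrically a cyclic cover and therefore totally ramified above each branch point. With $r = |\Lambda|$ the number of (geometric) branch points, this gives $r(q-1) = 2g-2+2q$, so $q \leq 2g+1$, $r \leq 2g+2$, and the ramification points of $\varphi$ are in bijection with $\Lambda$ (so corresponding points share their fields of definition). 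If $r \leq 3$, composing $\varphi$ with any $\bar K$-Möbius transformation sending $\Lambda$ into $\{0,1,\infty\}$ exhibits $X$ as a Belyi cover of degree $\leq q \leq 2g+1$, and the claimed bound is trivial since $\mu_X \geq 1$. Otherwise $r \geq 4$; pick three distinct $\lambda_1,\lambda_2,\lambda_3 \in \Lambda$ and let $L_0 = K(\lambda_1,\lambda_2,\lambda_3)$, which has degree $[L_0:\QQ] \leq d(2g+2)^3$, and let $\psi \in \textnormal{PGL}_2(L_0)$ be the unique fractional linear transformation sending $(\lambda_1,\lambda_2,\lambda_3)$ to $(0,1,\infty)$.

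The composition $\psi \circ \varphi : X_{L_0} \to \mathbb P^1_{L_0}$ is a finite morphism of degree $q$ whose branch set is $\{0,1,\infty\} \cup \{\mu_i : 4 \leq i \leq r\}$, where each $\mu_i = \psi(\lambda_i) = [\lambda_1,\lambda_2;\lambda_3,\lambda_i]$ is a cross-ratio of four distinct ramification points of $\varphi$ and lies in the field $L_i := K(\lambda_1,\lambda_2,\lambda_3,\lambda_i) \in \mathcal R$. The crucial input here is \cite[Proposition 2.1]{jore:shafarevich} of de Jong--R\'emond, which for any cyclic cover of prime degree $q$ shows that every such cross-ratio $\mu_i$ and its complement $1-\mu_i$ lie in $\mathcal O^\times_{S(L_i,q)}$; the definition of $\mu_X$ then gives $h(\mu_i) \leq \mu_{S(L_i,q)} \leq \mu_X$, hence $H_{\psi(\Lambda)} \leq \exp(\mu_X)$. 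Applying Lemma \ref{lem:belyi} to $\psi \circ \varphi$ over the base field $L_0$, with $\uu = 4[L_0:\QQ](q+g-1)^2$ and noting $\deg_B(X_{L_0}) = \deg_B(X)$, yields
$$\deg_B(X) \leq \bigl(4\uu \exp(\mu_X)\bigr)^{9\uu^3 \cdot 2^{\uu-2} \cdot \uu!} \cdot q.$$

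To conclude, I would verify the numerical inequality. Using $q \leq 2g+1$ and $[L_0:\QQ] \leq d(2g+2)^3$, a direct estimate gives $\uu \leq 972\,dg^5$, which is comfortably below $\nu/3$ for $\nu = d(5g)^5$. A standard Stirling-type bound yields $9\uu^3 \cdot 2^{\uu-2} \cdot \uu! \leq \uu^{\uu + O(1)} \leq \nu^{\nu/3}$, and taking logarithms in the displayed bound while absorbing $\log(4\uu) \leq \log \nu$ into the factor $\mu_X \geq 1$ gives $\log \deg_B(X) \leq \nu^{\nu/3}(\log \nu + \mu_X) + \log q \leq \nu^{\nu/2}\mu_X$, since the slack $\nu^{\nu/6}$ easily swallows the logarithmic factor. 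The main obstacle is really just this arithmetic bookkeeping: tracking how the extension degree $[L_0:\QQ]$, the factorial $\uu!$ from Lemma \ref{lem:belyi}, and the choice $\nu = d(5g)^5$ combine harmoniously. Fortunately, the exponent $\nu/2$ has been chosen with ample room above the $\nu/3 + O(1)$ that naturally arises, so the estimate goes through routinely.
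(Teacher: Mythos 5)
Your proposal is correct and follows essentially the same route as the paper: compose $\varphi$ with the M\"obius transformation sending three ramification points to $0,1,\infty$, invoke de Jong--R\'emond's \cite[Proposition 2.1]{jore:shafarevich} to see that the remaining branch points are cross-ratios lying in $\mathcal O^\times_{S(L_i,q)}$ for some $L_i\in\mathcal R$ (hence of height at most $\mu_X$), and then feed this into Lemma \ref{lem:belyi}. Your bookkeeping constant $[L_0:\QQ]\leq d(2g+2)^3$ is slightly weaker than the paper's $15dg^3$, but both give $\uu<\nu/3$, so the final estimate goes through unchanged.
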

To prove this lemma we shall combine the estimate for $\deg_B(X)$ in Lemma \ref{lem:belyi} with the following observation of Par{\v{s}}in in \cite{parshin:shafarevich}: 
The cross-ratios of the  branch points, of a hyperelliptic map of a genus two curve $X$ over $K$, are solutions of certain $S(L,2)$-unit equations.
Oort \cite[Lemma 2.1]{oort:shafarevich} and de Jong-R\'emond \cite[Proposition 2.1]{jore:shafarevich} generalized Par{\v{s}}in's idea to hyperelliptic curves  and to cyclic covers of prime degree.
\begin{proof}[Proof of Lemma \ref{lem:degbux}] 
We take three distinct (geometric) ramification points of $\varphi$. Let $M$ be the compositum of their fields of definition. On composing $\varphi$ with a suitable automorphism of  $\mathbb P^1_{M}$,
we get a finite morphism $X_{M}\to \mathbb P^1_{M}$ of degree $q$ such that $\{0,1,\infty\}\subset\Br$, where $\Br$ is the set of (geometric) branch points of $\varphi$. Let $H_\Lambda$ be the height of $\Br$, defined in  Section \ref{sec:curves}. To prove the inequality
$$
\log H_\Br\leq \mu_X,
$$
we may and do take $\br\in\Br$ with $\lambda\neq 0,1,\infty$. We write $U=S(K(\lambda),q)$. 
From \cite[Proposition 2.1]{jore:shafarevich} we deduce that the cross-ratios $\br=\textnormal{cr}(\infty,0,1,\br)$ and $1-\lambda=\textnormal{cr}(\infty,1,0,\br)$  are $U$-units in $K(\lambda)$. 
This implies that $h(\br)\leq \mu_X$, since $K(\lambda)\subseteq L$ for some $L\in\mathcal R$. Hence we obtain $\log H_\Br\leq \mu_X$ as desired.
Next, we observe that the ramification indexes of  $\varphi_{\bar{K}}:X_{\bk}\to \mathbb P^1_{\bk}$  are in $\{1,q\}$, and  $\textnormal{Gal}(\bk/K)$ acts on the (geometric) ramification points of $\varphi$.
Therefore  Hurwitz leads to $q\leq 2g+1$ and $[M:\QQ]\leq 15dg^3$. Then an application of Lemma \ref{lem:belyi} with $X_{M}\to \mathbb P^1_{M}$ gives an upper bound for $\deg_B(X)$ which together with the displayed inequality implies Lemma \ref{lem:degbux}.
\end{proof}

We remark that if $L\in\mathcal R$, then  Hurwitz (see the proof of Lemma \ref{lem:degbux}) leads to 
$q\leq 2g+1$ and $[L:K]\leq 24g^4,$
and \cite[Lemme 2.1]{jore:shafarevich} of de Jong-R\'emond implies that $L$ is unramified outside $S(K,q)$.

Next, we go into number theory and we give an upper bound for $\mu_X$ in terms of the  quantities $N_S$, $\nu$, $d$ and $D_K$ which are defined in (\ref{def:para}).
\begin{lemma}\label{lem:ux}
The following statements hold.
\begin{itemize}
\item[(i)] It holds $\mu_X\leq \nu^{d\nu/8}(N_SD_K)^{\nu}$.
\item[(ii)]  Let $r,\epsilon>1$ be real numbers. Suppose $(abc)$ holds for $n=24g^4d,r,\epsilon$ with the constant $c$. Then there exists an effective constant $c^*$, depending only on $c,r,\epsilon,d$ and $g$, such that $d\mu_{X}\leq  (dr+\epsilon)\log N_S+\epsilon\log D_K+c^*.$
\end{itemize}
\end{lemma}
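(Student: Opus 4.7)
The plan is to reduce both statements to a bound on $h(\lambda)$ for a fixed $L\in\mathcal R$ and a fixed $\lambda\in L$ with $\lambda,1-\lambda\in\mathcal O_{S(L,q)}^{\times}$, and then to control the number-theoretic invariants of $L$ explicitly in terms of $K,S,g$. The remarks following Lemma \ref{lem:degbux} give $q\leq 2g+1$, $[L:K]\leq 24g^{4}$, and $L/K$ unramified outside $S(K,q)=S\cup\{v\mid q\}$; hence $[L:\QQ]\leq 24g^{4}d=n$. Combining this with Dedekind's tower formula $D_{L}=N_{K/\QQ}(\mathfrak d_{L/K})\,D_{K}^{[L:K]}$ and the standard bounds on the conductor exponent at tame and wild primes yields explicit estimates of the shape
\[
\log D_{L}\leq [L:K]\bigl(\log D_{K}+\log N_{S}\bigr)+c_{1}(d,g),\qquad \log N_{S(L,q)}\leq [L:K]\log N_{S}+c_{2}(d,g).
\]

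For part (i), the hypothesis on $\lambda$ is precisely an $S(L,q)$-unit equation $\lambda+(1-\lambda)=1$ in $L$. An application of \cite[Proposition 6.1 (ii)]{rvk:szpiro}, which rests on the theory of logarithmic forms, then produces an explicit Baker-type upper bound for $h(\lambda)$ in terms of $[L:\QQ]$, $|S(L,q)|$, $N_{S(L,q)}$ and $D_{L}$. Substituting the setup estimates together with the trivial bound $|S(L,q)|\leq [L:K](|S|+d)\leq 24g^{4}(\log N_{S}+d)$, and simplifying with $\nu=d(5g)^{5}$, yields $h(\lambda)\leq \nu^{d\nu/8}(N_{S}D_{K})^{\nu}$; taking the supremum over $\lambda$ and $L\in\mathcal R$ gives (i).

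For part (ii), apply Conjecture $(abc)$ to the triple $(\alpha,\beta,\gamma)=(\lambda,1-\lambda,1)\in L^{3}$, which satisfies $\alpha+\beta=\gamma$ and lives in a field of degree $[L:\QQ]\leq n$: this gives $H_{L}(\lambda,1-\lambda,1)\leq c\,S_{L}(\lambda,1-\lambda,1)^{r}D_{L}^{\epsilon}$. Since $\lambda$ and $1-\lambda$ are $S(L,q)$-units, the support is contained in $S(L,q)$, and the identity $\prod_{v\mid p}N_{v}^{e_{v}}=p^{[L:\QQ]}$ for each rational prime $p$ below $S(L,q)$ gives $S_{L}(\lambda,1-\lambda,1)\leq (qN_{S})^{[L:\QQ]}$. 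Taking logarithms, using $dh(\lambda)\leq dh(\lambda,1-\lambda,1)=\tfrac{1}{[L:K]}\log H_{L}(\lambda,1-\lambda,1)$, substituting the setup bound for $\log D_{L}$, and absorbing $\log c/[L:K]$, $dr\log q$, and the $\log q$-contributions from the conductor of $L/K$ into a constant $c^{*}$ depending only on $c,r,\epsilon,d,g$, one obtains the required inequality $dh(\lambda)\leq (dr+\epsilon)\log N_{S}+\epsilon\log D_{K}+c^{*}$.

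The main obstacle in (i) is telescoping the numerous constants through the Baker-type bound and the tower estimates cleanly enough for the answer to collapse to the compact form $\nu^{d\nu/8}(N_{S}D_{K})^{\nu}$; for (ii) it is the careful book-keeping of the ramification of $L/K$ above $q$ so that the constant $c^{*}$ remains genuinely independent of $K$ and $S$.
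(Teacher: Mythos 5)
Your proposal is correct and follows essentially the same route as the paper: both reduce to the $S(L,q)$-unit equation satisfied by $\lambda$ and $1-\lambda$, control $q$, $[L:K]$ and the ramification of $L/K$ via the de Jong--R\'emond remarks, and then invoke \cite[Proposition 6.1 (ii)]{rvk:szpiro} for (i) and the $abc$ conjecture for (ii). The only difference is that in (ii) you apply Conjecture $(abc)$ to $(\lambda,1-\lambda,1)$ and do the tower/support bookkeeping by hand, whereas the paper outsources exactly this computation to \cite[Proposition 6.1 (iii)]{rvk:szpiro} together with \cite[Lemma 6.3]{rvk:szpiro}.
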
  

To prove (i) we use \cite[Proposition 6.1 (ii)]{rvk:szpiro}. It is based on the theory of logarithmic forms and we refer to the monograph of Baker-W\"ustholz \cite{bawu:logarithmicforms} in which the state of the art of this theory is exposed. 

\begin{proof}[Proof of Lemma \ref{lem:ux}]
We take $L\in \mathcal R$, and we write $U=S(L,q)$, $T=S(K,q)$ and $l=[L:K]$. Then we observe that $N_T=\prod_{v\in T}N_v$ satisfies $N_T\leq q^dN_S$. Furthermore, the remark after the proof of Lemma \ref{lem:degbux} gives that $q\leq 2g+1$ and $l\leq 24g^4,$ and that $L$ is unramified outside $T$. 
We now apply \cite[Proposition 6.1]{rvk:szpiro} with  $U=U$, $T=T$ and $S=T$, where the symbols $U,T,S$ on the left hand side of these equalities denote the sets in \cite[Proposition 6.1]{rvk:szpiro}. In particular, \cite[Proposition 6.1 (ii)]{rvk:szpiro} leads to 
$
\mu_U\leq \nu^{d\nu/8}(D_KN_S)^\nu
$ 
which implies statement (i).

To show statement (ii) we take real numbers $r,\epsilon>1$. We may and do assume that Conjecture $(abc)$ holds for $n=24g^4d,r,\epsilon$ with the constant $c$. Then Conjecture $(abc)$ holds in particular for $n'=ld,r,\epsilon$ with the same constant $c$, since $n'\leq n$.  Therefore we see that \cite[Proposition 6.1 (iii)]{rvk:szpiro} gives
$$d\mu_U\leq (dr+\epsilon)\log N_T+\epsilon\log D_K+\epsilon d t\log l-\frac{\epsilon}{l}\log N_T+\frac{1}{l}\log c$$  
for $t=\lvert T\rvert$. From \cite[Lemma 6.3]{rvk:szpiro} we get that  $\epsilon d t\log l-\frac{\epsilon}{l}\log N_T$ is bounded from above by an effective constant, which depends only on  $\epsilon,d$ and $g$. Hence we deduce statement (ii) and this completes the proof of Lemma \ref{lem:ux}.
\end{proof}

We recall that $h$ denotes the stable Arakelov height and that $h_{NT}$ denotes the N\'eron-Tate height. These heights are defined in (\ref{def:h}) and (\ref{def:nt}) respectively. We now prove Theorem \ref{thm:cyclic} and Proposition \ref{prop:abc} (i) simultaneously.

\begin{proof}[Proof of Theorem \ref{thm:cyclic} and Proposition \ref{prop:abc} (i)] 
On combining  Lemma \ref{lem:ex}, Lemma \ref{lem:zhang}, Lemma \ref{lem:nth} and Lemma \ref{lem:degbux}, we obtain
infinitely many $\p\in X(\bar{K})$ that satisfy
$\log \max\bigl(h_{NT}(\p),h(\p)\bigl)\leq 6\nu^{\nu/2}\mu_X.$
Then Lemma \ref{lem:ux} (i) and Lemma \ref{lem:ux} (ii)  imply Theorem \ref{thm:cyclic} (i) and  Proposition \ref{prop:abc} (i) respectively.
\end{proof}

The remaining of this section is devoted to the following Proposition \ref{prop:cyclic}. Let $c_\delta(g)$ be the minimum of Faltings' delta invariant on $M_g(\CC)$, defined in (\ref{def:delta}). We recall that if $g\geq 3$, then effective lower bounds for $c_\delta(g)$ in terms of $g$ are not known. We write $u(g)=8^{(11g)^38^g}$ and now we can state the following result.

\begin{proposition}\label{prop:cyclic}
The following statements hold. 
\begin{itemize}
\item[(i)] There are infinitely many $\p\in X(\bar{K})$ that satisfy
$$h(\p)\leq \nu^{8^gd\nu}(D_KN_S)^\nu-c_\delta(g).$$
\item[(ii)] Let $r,\epsilon>1$ be real numbers. Suppose Conjecture $(abc)$ holds for $n=24g^4d,r,\epsilon$ with the constant $c$. Then there exists an effective constant $c_1'$, depending only on $c,r,\epsilon,d$ and $g$, with the property that there are infinitely many points $\p\in X(\bar{K})$ which satisfy $$h(\p)\leq 6\frac{u(g)}{g-1}\bigl((r+\frac{\epsilon}{d})\log N_S+\frac{\epsilon}{d}\log D_K\bigl)+c_1'-\frac{c_\delta(g)}{2g-2}.$$
\end{itemize}
\end{proposition}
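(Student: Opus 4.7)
The plan is to replace the Belyi-degree route used for Theorem~\ref{thm:cyclic} by one that passes through the stable Faltings height $h_F(J)$ of the Jacobian $J=\textnormal{Pic}^0(X)$. This trade avoids the exponential loss produced by Lemma~\ref{lem:ex}, at the cost of introducing the constant $c_\delta(g)$, which is not known to be effective for $g\geq 3$; this explains the appearance of $c_\delta(g)$ on the right hand sides of (i) and (ii).

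First I would combine Lemma~\ref{lem:zhang} with Lemma~\ref{lem:nth}~(ii) to obtain, for every $\epsilon>0$, infinitely many $\p\in X(\bar K)$ satisfying
$$2(g-1)h(\p)\leq e(X)+\epsilon\leq 12 h_F(J)+4g\log(2\pi)-c_\delta(g)+\epsilon.$$
Both assertions therefore reduce to an explicit effective upper bound of the shape
$$h_F(J)\leq u(g)\,\mu_X + c(g),$$
with $c(g)$ depending only on $g$. Once such a bound is available, part (i) follows by plugging in the unconditional estimate $\mu_X\leq \nu^{d\nu/8}(N_SD_K)^\nu$ from Lemma~\ref{lem:ux}~(i), dividing by $2(g-1)$, and absorbing $u(g)$, $c(g)$ and the other $g$-dependent constants into the exponent $8^gd\nu$. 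Part (ii) follows by plugging in instead the conditional estimate $d\mu_X\leq (dr+\epsilon)\log N_S+\epsilon\log D_K+c^*$ from Lemma~\ref{lem:ux}~(ii) and collecting the resulting effective constants into the single constant $c_1'$.

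The main obstacle is the displayed linear bound for $h_F(J)$ in terms of $\mu_X$. For this I would invoke \cite[Proposition~4.1~(i)]{rvk:szpiro}, which implements the Par\v{s}in-Oort-de Jong-R\'emond method for cyclic covers of prime degree: after choosing $L\in\mathcal R$, the cross-ratios of any four distinct (geometric) ramification points of $\varphi:X\to\mathbb P^1_K$ are $S(L,q)$-units whose complements are also $S(L,q)$-units by \cite[Proposition~2.1]{jore:shafarevich}, so each such cross-ratio has height at most $\mu_X$ (this is exactly the input already used in the proof of Lemma~\ref{lem:degbux}). A Faltings-type isogeny/Torelli argument, carried out inside the moduli space of cyclic covers parameterised up to a finite ambiguity by configurations of branch points in $\mathbb P^1$, then bounds $h_F(J)$ linearly in these cross-ratio heights. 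The combinatorial cost of iterating over tuples of ramification points and controlling the ensuing finite ambiguity produces the factor $u(g)=8^{(11g)^38^g}$, which is essentially unavoidable with the present method and is the source of the remark that $u(g)$ is not optimal.

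The ancillary data required to apply \cite[Proposition~4.1~(i)]{rvk:szpiro} are already at hand from the remark following the proof of Lemma~\ref{lem:degbux}: one has $q\leq 2g+1$, $[L:K]\leq 24g^4$ and $L$ is unramified outside $S(K,q)$. With these verifications in place, the two assertions of the proposition follow from the reductions outlined above.
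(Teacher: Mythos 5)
Your proposal follows essentially the same route as the paper: Lemma~\ref{lem:zhang} plus Lemma~\ref{lem:nth}~(ii) reduce everything to the bound $h_F(J)\leq u(g)\mu_X$ from \cite[Proposition~4.1~(i)]{rvk:szpiro} (and the remark below it), after which Lemma~\ref{lem:ux}~(i) and (ii) give the two assertions. The only minor discrepancy is your speculative description of the mechanism behind that cited bound (the paper attributes the factor $u(g)$ to R\'emond's explicit height comparisons rather than an isogeny/Torelli argument), but since both of you simply invoke the external result, this does not affect the proof.
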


These (ineffective) inequalities improve exponentially, in terms of $N_S$ and $D_K$, the estimates provided by Theorem \ref{thm:cyclic} and Proposition \ref{prop:abc} (i).  On using Lemma \ref{lem:nth} one can formulate Proposition \ref{prop:cyclic} also in terms of $h_{NT}$. We mention that the factor $u(g)$ comes from explicit height comparisons of R\'emond \cite{remond:rational} which rely inter alia on  results of Bost-David-Pazuki \cite{pazuki:heights}. R\'emond's explicit height comparisons hold for arbitrary curves over $K$ and in our special case $X\in\mathcal C$ it seems possible to improve these height comparisons, and thus $u(g)$, up to a certain extent.

To prove Proposition \ref{prop:cyclic} we use the following tools. 
We combine Lemma \ref{lem:ux} with \cite[Proposition 4.1 (i)]{rvk:szpiro}. 
This slightly refines the method developed by Par{\v{s}}in \cite{parshin:shafarevich}, Oort \cite{oort:shafarevich} and de Jong-R\'emond \cite{jore:shafarevich}.
We also use Lemma \ref{lem:zhang} which is based on a theorem of Zhang \cite{zhang:positive}, and Lemma \ref{lem:nth} (ii) which relies on Moret-Bailly's refinement \cite{moba:noether}  of the Noether formula in Faltings' article \cite{faltings:arithmeticsurfaces}.

\begin{proof}[Proof of Proposition \ref{prop:cyclic}]
Let $h_F(J)$ be the absolute stable Faltings height of the Jacobian $J$ of $X$. The remark below \cite[Proposition 4 (i)]{rvk:szpiro} gives 
\begin{equation}\label{eq:fhux}
h_F(J)\leq u(g)\mu_X.
\end{equation}
Then Lemma \ref{lem:nth} (ii) shows $e(X)\leq 12u(g)\mu_X-c_\delta(g)+4g\log(2\pi)$ for $e(X)$  as in (\ref{def:ex}). Hence  Lemma \ref{lem:zhang} and Lemma \ref{lem:ux} imply Proposition \ref{prop:cyclic}.
\end{proof}

We conclude this section by the following remarks. Let $h_F(J)$ be as above, let $\Delta(X)$ be the stable discriminant of $X$ defined in (\ref{def:discriminant}) and let $e(X)$ be as in (\ref{def:ex}). Further, we define the quantity $\delta(X)=\frac{1}{d}\sum\delta(X_\sigma)$ with the sum taken over all embeddings $\sigma:K\hookrightarrow \CC$, where  
 $\delta(X_\sigma)$ is defined in Section \ref{sec:curves}. Then it holds 
\begin{equation}\label{eq:general}
\log\max(e(X),\delta(X),h_F(J),\Delta(X))\leq \nu^{d\nu}(D_KN_S)^\nu.
\end{equation} 
Indeed \cite[Theorem 1.1.1]{javanpeykar:belyi} gives that $\max(e(X),\delta(X),h_F(J),\Delta(X))$ is at most $10^9g^2\deg_B(X)^5$ and then Lemma \ref{lem:degbux} and Lemma \ref{lem:ux} prove the displayed inequality.

Next, we mention that de Jong-R\'emond  obtained in \cite[Theorem 1.2]{jore:shafarevich} a better estimate for $h_F(J)$ and that \cite[Theorem 3.2]{rvk:szpiro} gives an exponential better upper bound for $\Delta(X)$. The latter bound involves a constant, depending at most on $g$, which is only known to be effective for hyperelliptic curves $X$ over $K$. In addition, we note that \cite[Theorem 1.2]{jore:shafarevich} and \cite[Theorem 3.2]{rvk:szpiro}  both depend inter alia on the above mentioned explicit height comparisons of R\'emond in \cite{remond:rational}, and such height comparisons are not used in our proof of the displayed inequality.

We point out that our method using the Belyi degree gives that Theorem \ref{thm:cyclic}, Proposition \ref{prop:abc} (i) and inequality (\ref{eq:general}) hold more generally for any curve $Y$ over $K$ which admits a finite \'etale morphism to some $X\in \mathcal C$. Indeed on using that $Y$ is an \'etale cover of $X$ we deduce from Hurwitz that $\deg_B(Y)$ is explicitly bounded in terms of $\deg_B(X)$ and the genus of $Y$, and then the above arguments prove the generalization. Here we used in addition \cite[Corollary 4.10]{lilo:modelsofcurves} which assures that if $v$ is an arbitrary finite place of $K$ and $X$ has bad reduction at $v$, then $Y$ has bad reduction at $v$.

Finally, we mention that  the method, used in our paper, gives an affine plane model $f(z_1,z_2)=0$ of $X_{\bar{K}}$ such that if $a_i\in\bar{K}$ is a coefficient of $f$, then $h(a_i)\leq \nu^{d\nu}(D_KN_S)^\nu.$

\section{Hyperelliptic curves}\label{sec:hyper}

In this section we prove Theorem \ref{thm:hyper}. We also show two lemmas which provide explicit results for certain (Arakelov) invariants of hyperelliptic curves. Throughout this section we denote by $X$ a hyperelliptic curve over a number field $K$ of genus $g\geq 2$.

As before, we denote by $X_\sigma$ the compact connected Riemann surface corresponding to the base change of $X$ to $\mathbb C$ with respect to an embedding $\sigma:K\hookrightarrow \mathbb C$. Let $T(X_\sigma)$ be the invariant of de Jong. It is the norm of a canonical isomorphism between certain line bundles on $X_\sigma$ and we refer to \cite[Definition 4.2]{dejong:riemanninvariants} for a precise definition of $T(X_\sigma)$.

\begin{lemma}\label{lem:tx}
It holds $-36g^3\leq \log T(X_\sigma)$.
\end{lemma}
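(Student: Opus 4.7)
The plan is to reduce the desired lower bound for $\log T(X_\sigma)$ to known bounds on Faltings' delta invariant and on the hyperelliptic discriminant modular form, via an explicit analytic identity of de Jong. The starting point is de Jong's formula \cite[Theorem 4.3]{dejong:weierstrasspoints}, which for a hyperelliptic Riemann surface $X_\sigma$ of genus $g$ expresses $\log T(X_\sigma)$ as an affine combination (with coefficients that are explicit rational functions of $g$) of $\delta(X_\sigma)$ and $\log \|\varphi_g\|(X_\sigma)$, where $\varphi_g$ is the hyperelliptic discriminant modular form. After rearranging this identity to isolate $\log T(X_\sigma)$, the problem reduces to controlling $\delta(X_\sigma)$ from below and $\log\|\varphi_g\|(X_\sigma)$ in the appropriate direction.

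For the first input, I would invoke the explicit lower bound for Faltings' delta invariant in \cite[Proposition 5.1]{rvk:szpiro}, which gives $\delta(X_\sigma) \geq -c_1(g)$ for an explicit polynomial $c_1(g)$ in $g$. For the second input, I would use the explicit estimate for the hyperelliptic discriminant modular form provided by \cite[Lemma 5.4]{rvk:szpiro}, which is precisely the analytic ingredient already highlighted in the introduction as the replacement of Arakelov-theoretic inputs for the hyperelliptic case. Substituting these two bounds into the rearranged de Jong formula produces an inequality of the form $\log T(X_\sigma) \geq -c_2(g)$ with $c_2(g)$ a polynomial in $g$.

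The main obstacle will be numerical, not conceptual: the coefficients appearing in de Jong's formula grow polynomially in $g$ (roughly like $g^2$), and multiplying them by the bounds for $\delta(X_\sigma)$ and $\log\|\varphi_g\|(X_\sigma)$ produces further polynomial growth, so one needs careful bookkeeping and some rounding to confirm that the final constant fits under the clean value $36g^3$ rather than a larger cubic (or quartic) expression in $g$. The verification that the low-genus cases $g=2,3$ are not outliers—where the explicit formulas simplify but the constants from \cite[Proposition 5.1]{rvk:szpiro} may be larger per unit of $g$—is the only point where the estimate has to be checked by hand rather than by a uniform argument.
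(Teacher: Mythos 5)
Your proposal has a genuine gap: the key input you want to use does not exist. You propose to write $\log T(X_\sigma)$ as an affine combination of $\delta(X_\sigma)$ and $\log\lVert\Delta_g\rVert$ and then bound $\delta(X_\sigma)$ from below by an explicit polynomial $c_1(g)$ via \cite[Proposition 5.1]{rvk:szpiro}. But that proposition only gives the bound $-186\leq c_\delta(2)$ in genus two, and the paper is explicit that for $g\geq 3$ no effective lower bound for Faltings' delta invariant is known. Indeed, avoiding any lower bound on $\delta$ is precisely why the hyperelliptic case is treated separately: if your route worked for all $g$, much of the structure of Sections 5--7 would be unnecessary. You have also mis-identified the formula: \cite[Theorem 4.3]{dejong:weierstrasspoints} is the arithmetic Weierstrass-point identity used later in Lemma \ref{lem:wpf}, not an analytic identity for $\log T(X_\sigma)$.

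The actual argument is shorter and needs no delta invariant at all. By \cite[Theorem 4.7]{dejong:riemanninvariants}, for a suitable period matrix $\tau_\sigma\in\mathfrak H_g$ attached to the hyperelliptic curve one has
\begin{equation*}
T(X_\sigma)=(2\pi)^{-2g}\left|\Delta_g(\tau_\sigma)\,\textnormal{det}(\textnormal{im}\,\tau_\sigma)^{2a}\right|^{-(3g-1)/(8bg)},
\qquad a=\binom{2g+1}{g+1},\ b=\binom{2g}{g+1}.
\end{equation*}
The exponent is negative, so the \emph{upper} bound $\left|\Delta_g(\tau_\sigma)\,\textnormal{det}(\textnormal{im}\,\tau_\sigma)^{2a}\right|\leq k_1$ from \cite[Lemma 5.4]{rvk:szpiro} (with $k_1$ explicit) immediately yields the \emph{lower} bound $-36g^3\leq \log T(X_\sigma)$. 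Your instinct to use \cite[Lemma 5.4]{rvk:szpiro} was right, but it must be fed into this direct formula for $T(X_\sigma)$, not routed through $\delta(X_\sigma)$.
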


To prove this lemma we use de Jong's formula  \cite[Theorem 4.7]{dejong:riemanninvariants}. It expresses $T(X_\sigma)$ in terms of a certain hyperelliptic discriminant modular form, which we then estimate on using the explicit inequality in \cite[Lemma 5.4]{rvk:szpiro}.

\begin{proof}[Proof of Lemma \ref{lem:tx}]

We begin to state the formula for $T(X_\sigma)$ in \cite[Theorem 4.7]{dejong:riemanninvariants}.
Let $\mathfrak H_g$ be the Siegel upper half plane of complex symmetric $g\times g$ matrices with positive definite imaginary part. 
We denote by $\Delta_g$ the hyperelliptic discriminant modular form on $\mathfrak H_g$, defined in \cite[Section 5]{rvk:szpiro}. 
Since $X$ is hyperelliptic, there exists a 
finite morphism $\varphi:X_\sigma\to \mathbb P_{\mathbb C}^1$ of degree two. 
Write $H_1(X_\sigma,\ZZ)$ for the first 
homology group of $X_\sigma$ with coefficients in $\ZZ$. 
As in Mumford \cite[Chapter IIIa]{mumford:theta2}, 
we obtain a canonical symplectic basis of  $H_1(X_\sigma,\ZZ)$ with respect to a 
fixed ordering of the $2g+2$ branch points of $\varphi$. 
Then \cite[Theorem 4.7]{dejong:riemanninvariants} delivers a 
basis of the global sections of the sheaf of differentials on $X_\sigma$ which has the following property:
Integration of this basis over
the canonical symplectic basis of $H_1(X_\sigma,\ZZ)$ gives 
a period matrix $\tau_\sigma \in\mathfrak H_g$ that satisfies  
\begin{equation*}
T(X_\sigma)=(2\pi)^{-2g}\left|\Delta_g(\tau_\sigma)\textnormal{det}(\textnormal{im} (\tau_\sigma))^{2a}\right|^{-(3g-1)/(8bg)},
\end{equation*}
where $a=\binom{2g+1}{g+1}$ and $b=\binom{2g}{g+1}$. Furthermore, \cite[Lemma 5.4]{rvk:szpiro} gives an effective constant $k_1$, depending only on $g$, such that $\left|\Delta_g(\tau_\sigma)\textnormal{det}(\textnormal{im} (\tau_\sigma))^{2a}\right|\leq k_1.$
The effective constant $k_1$ is  explicitly computed in \cite[(15)]{rvk:szpiro}, and then the displayed formula for $T(X_\sigma)$ leads to an inequality as stated in Lemma \ref{lem:tx}.
\end{proof}

Let $h_F(J)$ be  the absolute stable Faltings height of the Jacobian $J$ of $X$ and let $h_{NT}$ be the N\'eron-Tate height which is defined in (\ref{def:nt}).

\begin{lemma}\label{lem:wpf}
 If $\mathcal W$ denotes the set of Weierstrass points of $X$, then $$\sum_{\p\in \mathcal W}h_{NT}(\p)\leq (3g-1)(8g+4)h_F(J)+293g^5.$$
\end{lemma}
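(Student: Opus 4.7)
The plan is to invoke the formula of de Jong \cite[Theorem 4.3]{dejong:weierstrasspoints}, singled out in the introduction as the main replacement for Arakelov theory in the hyperelliptic setting, and to combine it with the explicit archimedean estimate proved in Lemma \ref{lem:tx}. De Jong's formula, after the semistable base change $L$ introduced in Section \ref{sec:conj} and proper $[L:\mathbb Q]$-normalization, expresses the Weierstrass sum $\sum_{\p \in \mathcal W} h_{NT}(\p)$ as a linear combination of the stable Faltings height $h_F(J)$, the non-negative stable discriminant $\Delta(X)$ from (\ref{def:discriminant}), and a sum of archimedean contributions built from the de Jong invariant $T(X_\sigma)$, with coefficients depending only on $g$.

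Concretely, I would first read off the identity in the schematic form
$$\sum_{\p \in \mathcal W} h_{NT}(\p) \;=\; (3g-1)(8g+4)\, h_F(J)\;+\;\alpha\,\Delta(X)\;-\;\beta\cdot \frac{1}{[L:\mathbb Q]}\sum_\sigma \log T(X_\sigma)\;+\;\gamma,$$
with $\alpha,\beta,\gamma$ effective constants depending only on $g$; the coefficient $(3g-1)(8g+4)$ in front of $h_F(J)$ is natural, matching the exponent $(3g-1)/(8bg)$ that controlled $T(X_\sigma)$ in the proof of Lemma \ref{lem:tx} after clearing the binomial $b=\binom{2g}{g+1}$. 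Assuming the sign of $\alpha$ is favourable (so the $\Delta(X)$-term can be dropped using $\Delta(X)\geq 0$), I would then invoke Lemma \ref{lem:tx}: the uniform bound $\log T(X_\sigma)\geq -36g^3$ implies
$$-\frac{1}{[L:\mathbb Q]}\sum_\sigma \log T(X_\sigma) \;\leq\; 36g^3.$$
Substituting into the identity gives $\sum_{\p \in \mathcal W} h_{NT}(\p) \leq (3g-1)(8g+4)\,h_F(J) + 36\beta g^3 + \gamma$, and a crude numerical estimate showing $36\beta g^3 + \gamma \leq 293g^5$ closes the argument.

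The main obstacle is bookkeeping rather than conceptual: pinning down the exact coefficients $\alpha,\beta,\gamma$ in de Jong's identity, confirming the correct $[L:\mathbb Q]$-normalizations, and verifying that the sign of the archimedean term is such that Lemma \ref{lem:tx} (a lower bound on $\log T$) points in the useful direction. Should de Jong's formula present $\Delta(X)$ with an unfavourable sign, I would fall back on the hyperelliptic discriminant estimate \cite[Theorem 3.2]{rvk:szpiro} referred to at the end of Section \ref{sec:cyclic} to absorb that term into the $h_F(J)$-coefficient with a small explicit perturbation. Either way, the inequality then follows immediately.
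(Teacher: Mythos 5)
Your proposal follows the paper's proof essentially exactly: invoke de Jong's formula \cite[Theorem 4.3]{dejong:weierstrasspoints}, drop the non-negative finite-place contributions, and bound the archimedean term via Lemma \ref{lem:tx}, with the numerics $8g^2\cdot 36g^3+4g(2g-1)(g+1)\log(2\pi)\leq 293g^5$ closing the estimate. The only bookkeeping you left schematic — and which the paper carries out — is that de Jong's identity also contains a residual-divisor term $(E,\omega)$ and vertical correction terms $-(\Phi_x,\Phi_x)$, both non-negative and hence droppable exactly like $\Delta(X)$, together with the multiplicity identity $n_x=g(g-1)/2$ that converts de Jong's weighted Weierstrass sum into $\sum_{x\in\mathcal W}h_{NT}(x)$.
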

To prove  Lemma \ref{lem:wpf} we use de Jong's formula \cite[Theorem 4.3]{dejong:weierstrasspoints}. This formula involves inter alia $h_F(J)$ and $\sum_{\p\in \mathcal W}h_{NT}(\p)$, and an analytic term, related to $T(X_\sigma)$, which we control by Lemma \ref{lem:tx}.
\begin{proof}[Proof of Lemma \ref{lem:wpf}]
To state the formula in \cite[Theorem 4.3]{dejong:weierstrasspoints} we introduce quantities $A_1,A_2$ and $A_3$. 
Let $L$ be a finite extension  of $K$ such that $X_L$ has semi-stable reduction over the spectrum $B$ of the ring of integers of $L$ and such that all Weierstrass points of $X$ are in $X(L)$. 
We define
$$A_1=-4g(2g-1)(g+1)\log(2\pi)+\frac{8g^2}{[L:\QQ]}\sum\log T(X_\sigma)$$
with the sum taken over all embeddings $\sigma:L\hookrightarrow \mathbb C.$ Let $\mathcal X\to B$, $(\cdot,\cdot)$  and  $\omega$ be as in (\ref{def:h}). 
We denote by  $E$  the residual divisor on $\mathcal X$ defined in \cite[p.286]{dejong:weierstrasspoints}. Let $\Delta(X)$ be  the stable discriminant of $X$ defined in (\ref{def:discriminant}).
We take $$A_2=(2g-1)(g+1)\Delta(X)+\frac{4}{[L:\QQ]}(E,\omega).$$
For any section $\p\in \mathcal X(B)$ of $\mathcal X\to B$,  let $\Phi_\p$ be the (unique) vertical $\QQ$-Cartier divisor on $\mathcal X$ such that the supports of $\Phi_\p$ and $\p(B)$ are disjoint and  any irreducible component $\Gamma$ of any fiber of $\mathcal X\to B$ satisfies $((2g-2)x-\omega+\Phi_\p,\Gamma)=0$.
We write $$A_3=\frac{1}{[L:\QQ]g(g-1)}\sum_{\p\in \mathcal W}-(\Phi_\p,\Phi_\p)n_\p$$
with $n_\p$ the multiplicity of $\p$ in the Weierstrass divisor $W$   on $\mathcal X$, where $W$  is defined in \cite[p.286]{dejong:weierstrasspoints}. 
Then \cite[Theorem 4.3]{dejong:weierstrasspoints} gives
$$(3g-1)(8g+4)h_F(J)=A_1+A_2+A_3+\frac{2}{g(g-1)}\sum_{\p\in \mathcal W} h_{NT}(\p)n_\p.$$
We now estimate the quantities $A_1,A_2$ and $A_3$ from below. To deal with $A_1$ we use Lemma \ref{lem:tx}. It gives 
$-293g^5\leq A_1.$ 
The divisor $E$ on $\mathcal X$ is vertical and effective, and our minimal  $\mathcal X$ does not contain any exceptional curves. This implies that $(E,\omega)\geq 0$. Then $\Delta(X)\geq 0$ and $-(\phi_\p,\phi_\p)\geq 0$ show that $A_2$ and $A_3$ are both non-negative. Furthermore,  \cite[Lemma 3.2]{dejong:weierstrasspoints}  gives $n_x=g(g-1)/2$. Thus the above displayed formula and the lower bounds for $A_1,A_2$ and $A_3$ imply an inequality as claimed.
\end{proof}

On using the inequality given in Lemma \ref{lem:wpf} we now prove Theorem \ref{thm:hyper} and Proposition \ref{prop:abc} (ii) simultaneously.

\begin{proof}[Proof of Theorem \ref{thm:hyper} and Proposition \ref{prop:abc} (ii)]
Since $X$ is a hyperelliptic curve over $K$, there exists a finite morphism $X\to \mathbb P^1_K$  which is geometrically a cyclic cover of prime degree two. Then, on combining Lemma \ref{lem:wpf}, Lemma \ref{lem:ux} and inequality (\ref{eq:fhux}), we deduce Theorem \ref{thm:hyper} and Proposition \ref{prop:abc} (ii).
\end{proof}

 Let $x\in \mathcal W$. We remark that the arguments of Burnol \cite[Theorem B]{burnol:wp} imply an upper bound for $h_{NT}(x)$ in terms of certain Arakelov invariants of $X$.  However, it turns out that the bound for $h_{NT}(x)$ in Lemma \ref{lem:wpf} leads to a better inequality in Theorem \ref{thm:hyper}.

\section{Genus two curves}\label{sec:genus2}

In this section we prove Theorem \ref{thm:genus2} and Proposition \ref{prop:abc} (iii). 

Let $c_\delta(2)$ be the minimum of Faltings' delta invariant on $M_2(\CC)$, see (\ref{def:delta}). The following lemma was established in \cite[Proposition 5.1 (v)]{rvk:szpiro}.  

\begin{lemma}\label{lem:dx}
It holds $-186\leq c_\delta(2)$.
\end{lemma}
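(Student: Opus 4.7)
The plan is to exploit the fact that in genus two every curve is hyperelliptic, so there is an explicit closed-form expression for Faltings' delta invariant in terms of a suitable modular form on the Siegel upper half plane $\mathfrak H_2$. Concretely, I would use (a mild variant of) the formula of Bost/Moret-Bailly expressing $\delta(X_\sigma)$ as
$$\delta(X_\sigma)=-\tfrac{2}{5}\log\bigl|\Delta_2(\tau_\sigma)\det(\textnormal{im}\,\tau_\sigma)^{5}\bigl|+\gamma,$$
up to a computable universal constant $\gamma$, where $\tau_\sigma\in\mathfrak H_2$ is the period matrix coming from the symplectic basis attached to an ordering of the six branch points of a hyperelliptic map $X_\sigma\to\mathbb P^1_\CC$, and $\Delta_2$ is the hyperelliptic discriminant modular form already used in the proof of Lemma \ref{lem:tx}. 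Thus proving $c_\delta(2)\geq -186$ reduces to giving an effective upper bound on the right-hand side as $X_\sigma$ ranges over $M_2(\CC)$.

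The point now is that the function $\tau\mapsto|\Delta_2(\tau)\det(\textnormal{im}\,\tau)^{5}|$ is invariant under the action of $\textnormal{Sp}_4(\ZZ)$ on $\mathfrak H_2$, so I may replace $\tau_\sigma$ by an $\textnormal{Sp}_4(\ZZ)$-translate lying in the Siegel fundamental domain $\mathcal F_2$. On $\mathcal F_2$ both factors are controllable: the reduction conditions bound $\det(\textnormal{im}\,\tau)$ polynomially in the diagonal entries $y_{ii}$ of $\textnormal{im}\,\tau$, and $|\Delta_2(\tau)|$ admits a convergent Fourier expansion (essentially that of Igusa's cusp form $\chi_{10}$) whose low-order terms dominate as $y_{11},y_{22}\to\infty$. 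The explicit upper bound for $|\Delta_2(\tau)\det(\textnormal{im}\,\tau)^{2a}|$ worked out in \cite[Lemma 5.4]{rvk:szpiro}, specialized to $g=2$, can be re-used essentially verbatim here; combining it with the closed-form expression above and numerically evaluating all constants gives the bound $-186\leq c_\delta(2)$.

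The hard part, and the real content of the lemma, is the effective numerical constant. Getting the minus sign and the value $-186$ requires a careful, term-by-term estimate of the Fourier expansion of $\chi_{10}$ on $\mathcal F_2$, keeping track of the contribution of the volume factor $\det(\textnormal{im}\,\tau)^{5}$ and the additive constant $\gamma$ (which in turn involves factors of $(2\pi)$ and some combinatorial constants from the formula relating $\delta$ to $\Delta_2$). No single step is conceptually difficult, but I expect that without already having the inequality $|\Delta_2(\tau)\det(\textnormal{im}\,\tau)^{2a}|\leq k_1$ from \cite[Lemma 5.4]{rvk:szpiro} at hand, one would have to redo a somewhat delicate bookkeeping argument on $\mathcal F_2$. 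Since the inequality is exactly \cite[Proposition 5.1 (v)]{rvk:szpiro}, the cleanest proof is simply to quote that result, which already packages the above estimates into the stated bound.
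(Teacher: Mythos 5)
Your proposal ends by quoting \cite[Proposition 5.1 (v)]{rvk:szpiro}, which is exactly what the paper does: its entire proof of this lemma is that citation. The preceding sketch of how that external result is obtained (an explicit genus-two formula for $\delta$ in terms of the hyperelliptic discriminant modular form, bounded on the Siegel fundamental domain) is a reasonable account of the cited proof but is not reproduced in this paper.
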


We now combine Lemma \ref{lem:dx} with the arguments used in the proof of Proposition \ref{prop:cyclic} to prove Theorem \ref{thm:genus2} and Proposition \ref{prop:abc} (iii).
\begin{proof}[Proof of Theorem \ref{thm:genus2} and Proposition \ref{prop:abc} (iii)]
Let $X$ be a genus two curve  over a number field $K$. It is a hyperelliptic curve over $K$ by  \cite[Proposition 7.4.9]{liu:ag}. Thus there exists a finite morphism $X\to \mathbb P^1_K$ which is geometrically a cyclic cover of prime degree two. As before, we denote by $h$ and $h_{NT}$ the stable Arakelov height and the N\'eron-Tate height respectively, defined in (\ref{def:h}) and (\ref{def:nt}). Then  Lemma \ref{lem:zhang}, Lemma \ref{lem:nth}, inequality (\ref{eq:fhux}) and Lemma \ref{lem:dx} give infinitely many $x\in X(\bar{K})$ that satisfy 
$$\frac{1}{4}h_{NT}(\p)\leq h(\p)\leq 6u(2)\mu_X+101$$ for $\mu_X$ as in (\ref{def:ux}) and $u(2)$ as in Proposition \ref{prop:abc}. Thus the upper bounds for $\mu_X$ in Lemma \ref{lem:ux} (i) and Lemma \ref{lem:ux} (ii)  imply Theorem \ref{thm:genus2} and Proposition \ref{prop:abc} (iii) respectively. 
\end{proof}

{\bf Acknowledgements:}  
Most of the results were obtained at the IAS Princeton. Both authors would like to thank the IAS for the friendly environment and for the generous financial support. 
In addition, R.v.K. would like to thank Professor Szpiro for giving helpful explanations. R.v.K. was supported 
by the
National Science Foundation under agreement No. DMS-0635607.

{\footnotesize
\bibliographystyle{amsalpha}
\bibliography{../../literature}
}

\vspace{0.5cm}

\noindent Mathematical Institute, University of Leiden,  2300 RA Leiden, The Netherlands\\
E-mail adress: {\sf ajavanp@math.leidenuniv.nl}

\vspace{0.5cm}

\noindent IH\'ES, 35 Route de Chartres, 91440 Bures-sur-Yvette, France\\
E-mail adress: {\sf rvk@ihes.fr}

\end{document}